\providecommand{\U}[1]{\protect\rule{.1in}{.1in}}
\renewcommand{\hat}{\widehat}
\renewcommand{\epsilon}{\varepsilon}
\newtheorem{theorem}{Theorem}
\theoremstyle{plain}
\newtheorem{corollary}[theorem]{Corollary}
\newtheorem{lemma}[theorem]{Lemma}
\newtheorem{proposition}[theorem]{Proposition}
\newtheorem{remark}[theorem]{Remark}
\numberwithin{equation}{section}
\begin{document}
\title{Haar basis testing}
\author[M. Alexis]{Michel Alexis}
\address{Mathematical Institute, University of Bonn, Endenicher Allee 60, 53115, Bonn, Germany}
\email{micalexis.math@gmail.com}
\author[J. L. Luna-Garcia]{Jose Luis Luna-Garcia}
\address{Department of Mathematics \& Statistics, McMaster University, 1280 Main Street
West, Hamilton, Ontario, Canada L8S 4K1}
\email{lunagaj@mcmaster.ca}
\author[E.T. Sawyer]{Eric T. Sawyer}
\address{Department of Mathematics \& Statistics, McMaster University, 1280 Main Street
West, Hamilton, Ontario, Canada L8S 4K1 }
\email{Sawyer@mcmaster.ca}
\thanks{M. Alexis is funded by the Deutsche Forschungsgemeinschaft (DFG, German
Research Foundation) under Germany's Excellence Strategy - GZ 2047/1,
Projekt-ID 390685813.}
\thanks{E. Sawyer is partially supported by a grant from the National Research Council
of Canada}

\begin{abstract}
We show that for two doubling measures $\sigma$ and $\omega$ on $\mathbb{R}%
^{n}$ and any \emph{fixed} dyadic grid $\mathcal{D}$ in
$\mathbb{R}^{n}$,
\[
\mathfrak{N}_{\mathbf{R}^{\lambda, n}}\left(  \sigma,\omega\right)  \approx\mathfrak{H}%
_{\mathbf{R}^{\lambda, n}}^{\mathcal{D},\operatorname*{glob}}\left(  \sigma,\omega\right)
+\mathfrak{H}_{\mathbf{R}^{\lambda, n}}^{\mathcal{D},\operatorname*{glob}}\left(
\omega, \sigma\right)  \ ,
\]
where $\mathfrak{N}_{\mathbf{R}^{\lambda, n}} (\sigma, \omega)$ denotes the $L^2 (\sigma) \to L^2 (\omega)$
operator norm of the vector-Riesz transform $\mathbf{R}^{\lambda, n}$ of fractional order $\lambda \neq 1$, and%
\[
\mathfrak{H}_{\mathbf{R}^{\lambda,n}}^{\mathcal{D},\operatorname*{glob}}\left(
\sigma,\omega\right)  \equiv\sup_{I\in\mathcal{D}}\left\Vert \mathbf{R}^{\lambda,n} h_{I}^{\sigma}\right\Vert _{L^{2}\left(  \omega\right)  }\ ,
\]
is the global Haar testing characteristic for $\mathbf{R}^{\lambda,n}$ on the grid
$\mathcal{D}$, and $\left\{  h_{I}^{\sigma}\right\}  _{I\in\mathcal{D}}$ is
the weighted Haar orthonormal basis of $L^{2}\left(  \sigma\right)  $ arising in the work
of Nazarov, Treil and Volberg.

We also show this theorem extends more generally to weighted Alpert wavelets which replace the weighted Haar wavelets in the proofs of some recent two-weight $T1$ theorems \cite{RaSaWi, AlSaUr, SaWi}.

Finally, we briefly pose these questions in the context of orthonormal bases
in arbitrary Hilbert spaces.

\end{abstract}
\maketitle
\tableofcontents

\section{Introduction: orthonormal bases}

Suppose $\mathcal{H}_{1}$ and $\mathcal{H}_{2}$ are separable Hilbert spaces
and that $\mathcal{B}\subset B\left(  \mathcal{H}_{1},\mathcal{H}_{2}\right)
$ is a collection of bounded linear operators from $\mathcal{H}_{1}$ to
$\mathcal{H}_{2}$. A natural question is whether the
collection $\mathcal{B}$ is uniformly bounded in $B\left(  \mathcal{H}%
_{1},\mathcal{H}_{2}\right)  $. The \emph{uniform boundedness principle} says
that this is the case \emph{if and only if} the collection is pointwise
bounded, i.e.,
\[
\sup_{T\in\mathcal{B}}\left\Vert Tf\right\Vert _{\mathcal{H}_{2}}<\infty\text{
for every }f\in\mathcal{H}_{1}\ .
\]
This pointwise criterion requires testing over \textbf{all} $f\in
\mathcal{H}_{1}$, or at least over a subset of second category in
$\mathcal{H}_{1}$, and the question of testing over a smaller set of
functions becomes relevant. A minimal set of\ reasonable testing functions is
given by any fixed orthonormal basis $\mathcal{O}_{1}$ of $\mathcal{H}_{1}$,
and since boundedness of $T$ is equivalent to boundedness of the adjoint
$T^{\ast}$, it is reasonable to include testing the adjoint $T^{\ast}$ over
a\ fixed orthonormal basis $\mathcal{O}_{2}$ of $\mathcal{H}_{2}$. This leads
to the following loosely formulated question for a set $\mathcal{B}$ of
bounded linear operators from $\mathcal{H}_{1}$ to $\mathcal{H}_{2}$ and a
pair of orthonormal bases $\mathcal{O}_{1}$ and $\mathcal{O}_{2}$ of
$\mathcal{H}_{1}$ and $\mathcal{H}_{2}$, respectively.
\begin{align}
  \text{Is }\mathcal{B} &\text{ uniformly bounded in }B\left(
\mathcal{H}_{1},\mathcal{H}_{2}\right)  \text{ if }\label{question} T \text{ and } T^{\ast} \text{ are uniformly bounded on } \\&\mathcal{O}_{1}\text{ and } \mathcal{O}_{2}\text{, respectively, for all }T\in\mathcal{B}%
?\nonumber
\end{align}

To be more precise, suppose $\mathcal{H}_{1}$ and $\mathcal{H}_{2}$ are separable Hilbert spaces
with orthonormal bases $\mathcal{O}_{1}=\left(  e_{k}^{1}\right)
_{k\in\mathbb{N}}$ and $\mathcal{O}_{2}=\left(  e_{k}^{2}\right)
_{k\in\mathbb{N}}$ of $\mathcal{H}_{1}$ and $\mathcal{H}_{2}$ respectively.
Define the norm
\[
\left\Vert T\right\Vert _{\mathcal{H}_{1}\rightarrow\mathcal{H}_{2}}\equiv
\sup_{\left\Vert x\right\Vert _{\mathcal{H}_{1}}=1}\left\Vert Tx\right\Vert
_{\mathcal{H}_{2}}
\]
and the testing characteristics
\[
	\left\Vert T\right\Vert _{\mathcal{O}_{1}}%
\equiv\sup_{k\in\mathbb{N}}\left\Vert Te_{k}^{1}\right\Vert _{\mathcal{H}_{2}%
}\text{ and }\left\Vert T^{\ast}\right\Vert _{\mathcal{O}_{2}}\equiv\sup
_{k\in\mathbb{N}}\left\Vert T^{\ast}e_{k}^{2}\right\Vert _{\mathcal{H}_{1}%
}\ ,
\]
where the name ``testing
characteristic'' arises because the supremum involves testing the action of an 
operator on a collection of ``test functions.'' We are interested in finding classes of bounded\ linear operators $\mathcal{B}%
\subset B\left(  \mathcal{H}_{1},\mathcal{H}_{2}\right)  $ from $\mathcal{H}%
_{1}$ to $\mathcal{H}_{2}$ whose norms are controlled by their testing characteristics on $\mathcal{O}_1$ and $\mathcal{O}_2$, i.e., 
\begin{equation}
\left\Vert T\right\Vert _{\mathcal{H}_{1}\rightarrow\mathcal{H}_{2}}\leq
C\left(  \left\Vert T\right\Vert _{\mathcal{O}_{1}}+\left\Vert T^{\ast
}\right\Vert _{\mathcal{O}_{2}}\right)  ,\ \ \ \ \ \text{for every }%
T\in\mathcal{B}. \label{controlled}%
\end{equation}

It is well known that the answer to Question (\ref{question}) can be negative for
$\mathcal{B}=B\left(  \mathcal{H}_{1},\mathcal{H}_{2}\right)  $ and
arbitrarily prescribed orthonormal bases in separable Hilbert
spaces\footnote{The answer to question (\ref{question}) is trivially
affirmative if we are allowed to choose the orthonormal basis $\mathcal{O}%
_{1}$ depending on $\mathcal{B}$. Indeed, simply choose $T\in\mathcal{B}$ and
$x\in\mathcal{H}_{1}$ such that $\frac{\left\Vert Tx\right\Vert _{\mathcal{H}%
_{2}}}{\left\Vert x\right\Vert _{\mathcal{H}_{1}}}>\frac{1}{2}\sup
_{B\in\mathcal{B}}\sup_{0\neq y\in\mathcal{H}_{1}}\frac{\left\Vert
By\right\Vert _{\mathcal{H}_{2}}}{\left\Vert y\right\Vert _{\mathcal{H}_{1}}}$
(if this supremum is infinite, simply choose $\frac{\left\Vert Tx\right\Vert
_{\mathcal{H}_{2}}}{\left\Vert x\right\Vert _{\mathcal{H}_{1}}}$ arbitrarily
large), and then extend $\left\{  x\right\}  $ to an orthonormal basis
$\mathcal{O}_{1}$.}. Indeed, in the\ single space case $\mathcal{H}%
_{1}=\mathcal{H}_{2}=\ell^{2}$, with the standard orthonormal basis $\left(
\mathbf{e}_{n}\right)  _{n=1}^{\infty}$, the question essentially asks
whether an infinite matrix $A=\left[  a_{mn}\right]  _{m,n=1}^{\infty}$
is bounded on $\ell^{2}$ when all of its rows and columns are uniformly in
$\ell^{2}$. For $\gamma>\frac{1}{2}$, the matrix $A$ with $a_{mn}=n^{-\gamma}$
if $n\geq m$ and $0$ otherwise, has all its rows and columns uniformly in
$\ell^{2}$, but the reader can easily compute that for $\frac{1}{2}<\gamma
\leq\frac{3}{4}$, we have $\left\Vert A\mathbf{v}\right\Vert _{\ell^{2}}=\infty$ for
$\mathbf{v}=\left(  n^{-\gamma}\right)  _{n=1}^{\infty}\in\ell^{2}$. Then the
sequence $\mathcal{B}=\left\{  A_{N}\right\}  _{N=1}^{\infty}\subset B\left(
\mathcal{H}_{1},\mathcal{H}_{2}\right)  $ of matrices $A_{N}=\left[
a_{mn}^{N}\right]  _{m,n=1}^{\infty}$, with $a_{mn}^{N}=a_{mn}$ if $n\leq N$
and $0$ otherwise, fails (\ref{controlled}). Hence we ask the following
refinement of Question (\ref{question}).

\begin{description}
\item[Basic question for orthonormal bases in Hilbert spaces] Given separable
Hilbert spaces $\mathcal{H}_{1}$ and $\mathcal{H}_{2}$ and a collection of
bounded linear operators $\mathcal{B}\subset B\left(  \mathcal{H}%
_{1},\mathcal{H}_{2}\right)  $, are there naturally occurring
orthonormal bases $\mathcal{O}_{1}$ and $\mathcal{O}_{2}$ in $\mathcal{H}_{1}$
and $\mathcal{H}_{2}$ such that the answer to question (\ref{question}) is
affirmative, i.e., (\ref{controlled}) holds?
\end{description}

In this paper we will fix a dyadic grid $\mathcal{D}$ in $\mathbb{R}^{n}$, and
first consider the  Hilbert spaces $\mathcal{H}_{1}=L^{2}\left(  \sigma\right)
,\mathcal{H}_{2}=L^{2}\left(  \omega\right)  $ with respective orthonormal
bases $\mathcal{O}_{1}=\left(  h_{I}^{\sigma}\right)  _{I\in\mathcal{D}%
},\mathcal{O}_{2}=\left(  h_{J}^{\omega}\right)  _{J\in\mathcal{D}}$, where
$\sigma$ and $\omega$ are doubling measures on $\mathbb{R}^{n}$, and $\left\{
h_{I}^{\mu}\right\}  _{I\in\mathcal{D}}$ is the collection of classical weighted Haar
wavelets on $\mathbb{R}^{n}$ associated to a measure $\mu$. In our main Theorem \ref{thm:main_L2} in Section \ref{section:Haar_results} below, we show what we call a $Th$ theorem for $T$ equal the 
vector Riesz transform (where the $h$ stands for Haar, and following the natural formulation of the problem in two-weight theory, we integrate the singular integral arising in the operator with respect to $\sigma$). %\ma{Technically, we should talk about $T = \mathbf{R}^{\lambda, n} _{\sigma}$ below.}
Namely, we show that the
class $\mathcal{B}_{\mathbf{R}^{\lambda, n} _\sigma }$ of truncated $\lambda$-fractional
vector Riesz transforms, which is of course
contained in $B\left(  \mathcal{H}_{1},\mathcal{H}_{2}\right)  $, satisfies (\ref{controlled}) when $\lambda \neq 1$. Recall that when $\lambda=0$, this class of
operators includes truncations of the classical Hilbert, Cauchy and Beurling transforms.
This not only gives an affirmative answer to question (\ref{question}) in this
case, it also shows that any vector Riesz transform $\mathbf{R}^{\lambda, n}$ is bounded\footnote{in the sense of Section \ref{Sec CZ}} from $L^{2}\left(  \sigma\right)
$ to $L^{2}\left(  \omega\right)  $  \emph{if and
only if}
\[
\left\Vert  \mathbf{R}^{\lambda, n}\right\Vert _{\mathcal{O}_{1}}+\left\Vert \mathbf{R}^{\lambda, n} \right\Vert
_{\mathcal{O}_{2}}<\infty.
\]

Recall the classical $T1$ theorems\footnote{in which the $1$ in $T1$ represents the indicator of a cube}, which more or less say the norm inequality for certain operators holds if and only if the inequality holds when testing the operator over all indicators of cubes. Compare this to the $Th$ type theorems proposed above: while indicators of cubes are dense in a Hilbert space, the $Th$ theorems have the advantage of only needing to test the operator $T$ on a \emph{countable basis} consisting of Haar wavelets. We point out that a local $Th$ theorem almost trivially implies a global $T1$ theorem: since Haar wavelets are a finite linear combination of indicators of cubes, the Haar testing characteristic is dominated by the (global) cube-testing characteristic by the triangle inequality, and hence any $Th$ theorem implies the (global) $T1$ theorem.

Our main strategy for proving the $Th$ Theorem \ref{thm:main_L2} is to show results like Theorem \ref{main''} which says that for the class of gradient elliptic $\lambda$-fractional operators, the (local) $T1$ theorem implies a (global) $Th$ theorem for doubling measures; we then apply these results to the vector Riesz transform, for which a $T1$ theorem holds \cite{LaWi, SaShUr9, SaWi}. In fact, the $T1$ theorem is the major constraint in our work, in that if one had a stronger $T1$ theorem which held for a wider class of gradient elliptic operators, our work would immediately imply a $Th$ theorem for that very same class of operators.

Theorem \ref{main''} is
interesting because a decomposition of functions into linear
combinations of orthogonal Haar wavelets is vital in the proofs of
the two-weight $T1$ theorems for Calder\'{o}n-Zygmund operators (while there are too many papers to cite, see for instance \cite{NTV,LaSaShUr3, Lac, Hyt2, SaShUr7} and some of the other papers cited in this paper and those ones). 
This suggests  Haar functions play such
a big role in two-weight $T1$ theorems because a $Th$ theorem may also hold: while we can  prove so only for the vector Riesz transform, perhaps for a wide class of Calder\'on-Zygmund operators
$T^{\lambda}$, we have $T^{\lambda}$ satisfies the norm inequality if and only if the norm inequality holds across all Haar wavelets.

Finally we extend the weighted Haar wavelet Theorem \ref{thm:main_L2} to the weighted Alpert wavelet Theorem \ref{main' Alpert} ; see Section \ref{section:Alpert} below for definitions and properties of the Alpert wavelets, where it is noted that the weighted Haar wavelets are simply weighted Alpert wavelets with moment-vanishing parameter $\kappa=1$. Just as for the Haar wavelets, Theorem \ref{main' Alpert} is significant because weighted Alpert wavelets play an analogous role in works on two-weight $T1$ theorems for doubling measures \cite{RaSaWi, AlSaUr, SaWi}.

Similar to the Haar case, the (local) Alpert testing characteristic is also controlled by the (global) cube-testing characteristic, and hence any ``Alpert testing theorem'' implies the $T1$ theorem. Indeed, first approximate an Alpert wavelet by a finite linear combination of indicators of cubes which are at least a fixed proportion of the size of the support of the wavelet. By the triangle inequality, the Alpert testing characteristic is then bounded by the cube-testing characteristic and a small multiple of the norm, which can be then absorbed. See \cite[proof of Theorem 20]{AlSaUr} for an analogous argument.
 %In this paper, one difficulty in proving that the $T1$ theorem implies a $Th$-type result is that crudely applying Minkowski's inequality to write
 %\begin{equation}\label{eq:basic_sharp_estimate}
%|\int K(x,y) f(y) d \sigma (y)| \leq \int |K(x,y)| |f(y)| d \sigma (y) 
% \end{equation}
% ignores the sign of the kernel $K$ and hence yields a suboptimal estimate for most functions $f$. We overcome this by considering a function $f$ which changes sign in the same direction as our singular integral kernel, and using $\kappa$-ellipticity to guarantee estimate \eqref{eq:basic_sharp_estimate} is sharp.

\section{Preliminaries: Calder\'on-Zygmund operators and weighted Haar wavelets}

\subsection{The operators: $\lambda$-fractional Calder\'{o}n-Zygmund
operators\label{Sec CZ}}

The class of bounded operators we consider are the
admissible truncations of smooth $\lambda$-fractional Calder\'{o}n-Zygmund
operators $T^{\lambda}$ on $\mathbb{R}^{n}$ for $0\leq\lambda<n$, as defined
for example in \cite[Section 2.1.2]{AlSaUr}, \cite{SaShUr9}. Such operators are associated
with real, smooth kernels $K^{\lambda}\left(  x,y\right)  $ satisfying the Calder\'on-Zygmund size and smoothness estimates
\begin{align} \label{size and smoothness}
  \left\vert \nabla_{x}^{m}K^{\lambda}\left(  x,y\right)  \right\vert
+\left\vert \nabla_{y}^{m}K^{\lambda}\left(  x,y\right)  \right\vert \leq
C_{\operatorname*{CZ}}\left\vert x-y\right\vert ^{\lambda-n-m} \, , \quad \text{for }m\geq0,\ \text{and }x,y\in\mathbb{R}^{n}\text{
with}\ x\neq y \, .
\end{align}
In particular, given a kernel $K^{\lambda}$, we may define smooth
\emph{admissible} truncations $K_{\varepsilon,R}^{\lambda}\left(  x,y\right)
$ which are constructed so that they vanish on $\{(x,y)\in\mathbb{R}%
^{n}\times\mathbb{R}^{n}~:~|x-y|<\varepsilon\text{ or }|x-y|>R\}$, are smooth,
and continue to satisfy (\ref{size and smoothness}). We identify
the collection of admissible truncations $K_{\varepsilon,R}^{\lambda}\left(
x,y\right)  $ of the kernel $K^{\lambda}$ with the operator $T^{\lambda}$, and we say that $T^{\lambda}$ is bounded from
$L^{p}\left(  \sigma\right)  $ to $L^{p}\left(  \omega\right)  $ if there exists a constant $\mathfrak{N}_{T^{\lambda},p}\left(  \sigma,\omega\right)$, referred to as the norm of $T^{\lambda}$, such that for all $f \in L^p \left (\sigma \right )$, we have
\begin{align*}
\left(  \int_{\mathbb{R}^{n}}\left\vert \int_{\mathbb{R}^{n}}K_{\varepsilon
,R}^{\lambda}\left(  x,y\right)  f\left(  y\right)  d\sigma\left(  y\right)
\right\vert ^{p}d\omega\left(  x\right)  \right)  ^{\frac{1}{p}} &
\leq\mathfrak{N}_{T^{\lambda},p}\left(  \sigma,\omega\right)  \left(
\int_{\mathbb{R}^{n}}\left\vert f\left(  y\right)  \right\vert ^{p}%
d\sigma\left(  y\right)  \right)  ^{\frac{1}{p}}\, .
\end{align*}

See, e.g., \cite{SaShUr9} for more details on this, and see \cite{Saw6} for how such bounds imply the
existence of a bounded operator that agrees with the truncations, extending the Lebesgue measure case in \cite{Ste2}.

Let $\kappa$ be a nonnegative integer. We say that $T^{\lambda}$ is a $\kappa
$\emph{-elliptic} Calder\'{o}n-Zygmund operator on $\mathbb{R}^{n}$ if in
addition to (\ref{size and smoothness}), there is a unit vector $\mathbf{v}%
\in\mathbb{S}^{n-1}$ and $c_{1}>0$ such that the kernel $K^{\lambda}\left(
x,y\right)  $ of $T^{\lambda}$ satisfies,
\begin{equation}
\label{eq:gradient_elliptic}\left\vert \left(
\frac{d}{dt}\right)  ^{\kappa}K^{\lambda}\left(  x,x+t\mathbf{v}\right)
\right\vert \geq c_{1}\left | t \right |^{\lambda-n-\kappa},\ \ \ \ \ \text{for all }%
x\in\mathbb{R}^{n}\text{ and }t>0\text{.}%
\end{equation}
Note that by the change of variables $y = x+ t \mathbf{v}$ and replacing $t$ by $-t$, we have \eqref{eq:gradient_elliptic} is equivalent to  
\[
\left\vert \left(  \frac{d}{dt}\right)  ^{\kappa
}K^{\lambda}\left(  y+t\mathbf{v},y\right)  \right\vert \geq c_{1}\left | t \right |^{\lambda-n-\kappa} \, , \quad \text{for all }%
y \in\mathbb{R}^{n}\text{ and }t>0\text{.}%
\, . 
\]
If $\kappa=0$, then $T^{\lambda}$ is called \emph{Stein elliptic}. If $\kappa=1$, then $T^{\lambda}$ is called \emph{gradient elliptic}, as introduced in \cite{SaShUr10}. One easily checks
that in the case $\kappa=1$, both functions
\[
\varphi_{y,\mathbf{v}}\left(  t\right)  =K^{\lambda}\left(  y+t\mathbf{v}%
,y\right)  \text{ and }\psi_{x,\mathbf{v}}\left(  t\right)  =K^{\lambda
}\left(  x,x+t\mathbf{v}\right)
\]
are monotone on $\left(  0,\infty\right)  $ and $\left(  -\infty,0\right)  $,
and that
\[
\left\vert \varphi_{y,\mathbf{v}}^{\prime}\left(  t\right)  \right\vert
\approx\left\vert \psi_{y,\mathbf{v}}^{\prime}\left(  t\right)  \right\vert
\approx\left\vert t\right\vert ^{\lambda-n-1},\ \ \ \ \ \text{for all }%
x,y\in\mathbb{R}^{n}\,\ \text{and }t\in\mathbb{R\setminus}\left\{  0\right\}
.
\]
We point out that $\kappa$-ellipticity implies $\kappa^{\prime}%
$-ellipticity for all $0\leq\kappa^{\prime}\leq\kappa$ upon using the
fundamental theorem of calculus on infinite rays, together with the size and
smoothness bounds (\ref{size and smoothness}).

We will often make reference to the \emph{Calder\'on-Zygmund data} for an operator $T^{\lambda}$: this refers to the constants in (\ref{size and smoothness}) and, depending on the context, the constant in \eqref{eq:gradient_elliptic}. 

Our main operator of interest is the vector Riesz transform. Given $0 \leq \lambda < n$, define the $\lambda$-fractional Riesz transform $\mathbf{R}^{\lambda, n}$ by
\[
\mathbf{R}^{\lambda, n}f (x) := \operatorname{p.v.} \int\limits_{\mathbb{R}^n} \frac{x-y}{|x-y|^{n+1 - \lambda}} f(y) \, d y \, .
\]
This vector-valued operator is $\kappa$-elliptic for all $\kappa \geq 0$.

\subsection{Orthonormal bases and weighted $L^{2}$ spaces}

For us, a cube will always be axis-parallel unless otherwise specified. We let $\mathcal{P}^{n}$ denote the set of all axis-parallel cubes in $\mathbb{R}^{n}$.

Recall that a measure $\mu$ on $\mathbb{R}^{n}$ is doubling if there exists a
constant $C>0$ such that 
\begin{equation}\label{eq:doubling}
	\mu\left(  2 Q \right)  \leq C \mu\left(  Q \right) \quad \text{ for all cubes } Q \, ,
\end{equation}
where $2 Q$ is the cube with same center but with sidelength double that of $Q$. The best constant $C$ in \eqref{eq:doubling} is the
\emph{doubling constant} for $\mu$.

A tiling $\mathcal{T}$ of $\mathbb{R}^{n}$ is a collection
of cubes $Q$ whose interiors are pairwise disjoint, and whose union is
$\mathbb{R}^{n}$. A dyadic grid $\mathcal{D}$ is a union
$\bigcup\limits_{k \in\mathbb{Z}} \mathcal{D}_{k}$ of tilings $\mathcal{D}%
_{k}$ of $\mathbb{R}^{n}$ satisfy the
following additional conditions

\begin{itemize}
	\item all cubes in $\mathcal{D}_{k}$ have sidelength $2^{-k}$.

\item any cube $Q \in\mathcal{D}_{k+1}$ is contained in a cube $Q^{\prime}%
\in\mathcal{D}_{k}$, called the dyadic parent of $Q$.

\item every cube $Q^{\prime}\in\mathcal{D}_{k}$ contains  $2^{n}$ cubes 
in $\mathcal{D}_{k+1}$, called its dyadic children.
\end{itemize}

The standard dyadic grid on $\mathbb{R}$ is the collection
of intervals $\left[  j2^{-k},\left(  j+1\right)  2^{-k}\right)  $ for
$j,k\in\mathbb{Z}$. Given a measure $\mu$ on $\mathbb{R}^{n}$ and an interval
$I$, a $\mu$-weighted Haar wavelet $h_{I}^{\mu}$ on $I$ is an $L^2 \left (\mu \right )$-normalized 
function satisfying the following properties:

\begin{itemize}
\item $\operatorname*{Supp}h_{I}^{\mu}$ is a union of dyadic children of $I$.

\item $h_{I}^{\mu}$ is constant on each dyadic child of $I$.

\item $\int h_{I}^{\mu}d\mu=0$.
\end{itemize}

If $I \subset \mathbb{R}^n$, the linear span of all the $\mu
$-weighted Haar wavelets on $I$ has dimension $2^{n}-1$. Hence when $n=1$, a Haar
wavelet $h_{I}^{\mu}$ on $I$ is determined uniquely up to its sign. But when $n \geq 2$, there many choices of $L^{2}(\mu
)$-orthonormal vectors $\{h_{I}^{\mu}\}$ which span this space. We let $\left\{  h_{I}^{\mu,\gamma
}\right\}  _{\gamma\in\Gamma_{I,n}^{\mu}}$ denote a choice of orthonormal
basis for this space, where $\Gamma_{I,n}^{\mu}$ is an index set of size equal to the dimension $2^{n}-1$.
But often we will be imprecise about which specific Haar wavelet we are considering, and will simply write
$h_{I}^{\mu}$ to refer to \emph{some} $\mu$-weighted Haar wavelet on 
$I$; similarly $\left\{  h_{I}^{\mu}\right\}  $ refers to the
collection of $\mu$-weighted Haar wavelets on $I$.

If a measure $\mu$ is doubling, then $\left\{  h_{I}^{\mu, \gamma}\right\}
_{I\in\mathcal{D}, \gamma \in \Gamma_{I, n} ^{\mu}}$ is an orthonormal basis of $L^{2}\left(  \mu\right)  $.
Given a dyadic grid $\mathcal{D}$,$\ $define the global and local
$\mathcal{D}$-Haar testing characteristics for $T^{\lambda}$ by%
\begin{align*}
\mathfrak{H}_{T^{\lambda}}^{\mathcal{D},\operatorname*{glob}}\left(
\sigma,\omega\right)   &  \equiv\sup_{I\in\mathcal{D}} \sup\limits_{ \left\{
h_{I}^{\sigma}\right\}  } \left\Vert T_{\sigma}^{\lambda}h_{I}^{\sigma
}\right\Vert _{L^{2}\left(  \omega\right)  }\text{ and }\mathfrak{H}%
_{T^{\lambda}}^{\mathcal{D},\operatorname*{glob}}\left(  \omega,\sigma\right)
\equiv\sup_{I\in\mathcal{D}} \sup\limits_{\left\{  h_{I}^{\omega}\right\}
}\left\Vert T_{\omega}^{\lambda,\ast}h_{I}^{\omega}\right\Vert _{L^{2}\left(
\sigma\right)  },\\
\mathfrak{H}_{T^{\lambda}}^{\mathcal{D},\operatorname{loc}}\left(
\sigma,\omega\right)   &  \equiv\sup_{I\in\mathcal{D}} \sup\limits_{ \left\{
h_{I}^{\sigma}\right\}  }\left\Vert \mathbf{1}_{I}T_{\sigma}^{\lambda}%
h_{I}^{\sigma}\right\Vert _{L^{2}\left(  \omega\right)  }\text{ and
}\mathfrak{H}_{T^{\lambda}}^{\mathcal{D},\operatorname{loc}}\left(
\omega,\sigma\right)  \equiv\sup_{I\in\mathcal{D}} \sup\limits_{ \left\{
h_{I}^{\omega}\right\}  }\left\Vert \mathbf{1}_{I}T_{\omega}^{\lambda}%
h_{I}^{\omega}\right\Vert _{L^{2}\left(  \sigma\right)  },
\end{align*}
and similarly for the familiar global, triple and local cube testing
characteristics for $T^{\lambda}$,%
\begin{align*}
\mathfrak{T}_{T^{\lambda}}^{\mathcal{D},\operatorname*{glob}}\left(
\sigma,\omega\right)   &  \equiv\sup_{I\in\mathcal{D}}\left\Vert T_{\sigma
}^{\lambda}\frac{\mathbf{1}_{I}}{\sqrt{\left\vert I\right\vert _{\sigma}}%
}\right\Vert _{L^{2}\left(  \omega\right)  }\text{ and }\mathfrak{T}%
_{T^{\lambda}}^{\mathcal{D},\operatorname*{glob}}\left(  \omega,\sigma\right)
\equiv\sup_{I\in\mathcal{D}}\left\Vert T_{\omega}^{\lambda,\ast}%
\frac{\mathbf{1}_{I}}{\sqrt{\left\vert I\right\vert _{\omega}}}\right\Vert
_{L^{2}\left(  \sigma\right)  },\\
\mathfrak{T}_{T^{\lambda}}^{\mathcal{D},\operatorname*{trip}}\left(
\sigma,\omega\right)   &  \equiv\sup_{I\in\mathcal{D}}\left\Vert \mathbf{1}_{3 I} T_{\sigma
}^{\lambda}\frac{\mathbf{1}_{I}}{\sqrt{\left\vert I\right\vert _{\sigma}}%
}\right\Vert _{L^{2}\left(  \omega\right)  }\text{ and }\mathfrak{T}%
_{T^{\lambda}}^{\mathcal{D},\operatorname*{trip}}\left(  \omega,\sigma\right)
\equiv\sup_{I\in\mathcal{D}}\left\Vert \mathbf{1}_{3 I} T_{\omega}^{\lambda,\ast}%
\frac{\mathbf{1}_{I}}{\sqrt{\left\vert I\right\vert _{\omega}}}\right\Vert
_{L^{2}\left(  \sigma\right)  },\\
\mathfrak{T}_{T^{\lambda}}^{\mathcal{D},\operatorname{loc}}\left(
\sigma,\omega\right)   &  \equiv\sup_{I\in\mathcal{D}}\left\Vert
\mathbf{1}_{I}T_{\sigma}^{\lambda}\frac{\mathbf{1}_{I}}{\sqrt{\left\vert
I\right\vert _{\sigma}}}\right\Vert _{L^{2}\left(  \omega\right)  }\text{ and
}\mathfrak{T}_{T^{\lambda}}^{\mathcal{D},\operatorname{loc}}\left(
\omega,\sigma\right)  \equiv\sup_{I\in\mathcal{D}}\left\Vert \mathbf{1}%
_{I}T_{\omega}^{\lambda,\ast}\frac{\mathbf{1}_{I}}{\sqrt{\left\vert
I\right\vert _{\omega}}}\right\Vert _{L^{2}\left(  \sigma\right)  }.
\end{align*}

We will also need the local cube testing characteristics, defined as
\[
\mathfrak{T}_{T^{\lambda}}^{\operatorname{loc}}\left(  \sigma,\omega\right)
\equiv\sup_{I\in\mathcal{P}^{n}}\left\Vert \mathbf{1}_{I}T_{\sigma}^{\lambda
}\frac{\mathbf{1}_{I}}{\sqrt{\left\vert I\right\vert _{\sigma}}}\right\Vert
_{L^{2}\left(  \omega\right)  }\text{ and }\mathfrak{T}_{T^{\lambda}%
}^{\operatorname{loc}}\left(  \omega,\sigma\right)  \equiv\sup_{I\in
\mathcal{P}^{n}}\left\Vert \mathbf{1}_{I}T_{\omega}^{\lambda,\ast}%
\frac{\mathbf{1}_{I}}{\sqrt{\left\vert I\right\vert _{\omega}}}\right\Vert
_{L^{2}\left(  \sigma\right)  }\, .
\]

Finally, we define the fractional Muckenhoupt characteristic%
\begin{equation}
\label{eq:A2}A_{2}^{\lambda}\left(  \sigma,\omega\right)  \equiv\sqrt
{\sup_{I\in\mathcal{P}^{n}}\frac{\left\vert I\right\vert _{\sigma}}{\left\vert
I\right\vert ^{1-\frac{\lambda}{n}}}\frac{\left\vert I\right\vert _{\omega}%
}{\left\vert I\right\vert ^{1-\frac{\lambda}{n}}}},\ \ \ \ \ 0\leq\lambda<n \, .
\end{equation}

\section{Main results: Haar wavelets}\label{section:Haar_results}
If $f$ and $g$ are two nonnegative functions, we write $f \lesssim g$ if there exists a positive constant $C$ such that
\[
f \leq C g \, ,
\]
for all instances of the arguments of $f, g$. We define $f \gtrsim g$ similarly, and write $f \approx g$ if $f \lesssim g$ and $f \gtrsim g$.

Throughout this paper, the  constants implicit  in $\lesssim, \gtrsim$ and
$\approx$ only depend on the doubling constants for
$\sigma,\omega$, the Calder\'{o}n-Zygmund data for the 
operator $T^{\lambda}$, $n$, and also on $\kappa$ and $p$ when they appear. Our main theorem below only concerns the vector Riesz transform.
\begin{theorem}
\label{thm:main_L2} \label{main'}If $\sigma$ and $\omega$ are doubling
measures on $\mathbb{R}^{n}$, $\mathcal{D}$ is a dyadic grid on $\mathbb{R}%
^{n}$, $\lambda \in [0,n) \setminus \{1\}$, then%
\[
\mathfrak{N}_{\mathbf{R}^{\lambda,n}}\left(  \sigma,\omega\right)  \approx\mathfrak{H}%
_{\mathbf{R}^{\lambda,n}}^{\mathcal{D},\operatorname*{glob}}\left(  \sigma,\omega\right)
+\mathfrak{H}_{\mathbf{R}^{\lambda,n}}^{\mathcal{D},\operatorname*{glob}}\left(
\omega,\sigma\right)  \ .
\]

\end{theorem}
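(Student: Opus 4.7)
The lower bound $\mathfrak{N}_{T^{\lambda}}(\sigma,\omega)\gtrsim\mathfrak{H}_{T^{\lambda}}^{\mathcal{D},\operatorname*{glob}}(\sigma,\omega)+\mathfrak{H}_{T^{\lambda,\ast}}^{\mathcal{D},\operatorname*{glob}}(\omega,\sigma)$ is immediate, since each Haar function $h_{I}^{\sigma}$ is a unit vector in $L^{2}(\sigma)$, and similarly for $h_{J}^{\omega}$ in $L^{2}(\omega)$. All the work is in the upper bound, and my plan is to combine a known two-weight $T1$ theorem for doubling measures with two separate reductions: (i) cube testing is controlled by Haar testing, and (ii) the fractional Muckenhoupt characteristic $A_{2}^{\lambda}$ is controlled by Haar testing, the latter using gradient ellipticity in an essential way.

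First, I invoke a $T1$-type theorem in the spirit of \cite{AlSaUr} for doubling measures, which yields
\[
\mathfrak{N}_{T^{\lambda}}(\sigma,\omega)\lesssim A_{2}^{\lambda}(\sigma,\omega)+\mathfrak{T}_{T^{\lambda}}^{\mathcal{D},\operatorname*{glob}}(\sigma,\omega)+\mathfrak{T}_{T^{\lambda,\ast}}^{\mathcal{D},\operatorname*{glob}}(\omega,\sigma).
\]
To control cube testing by Haar testing, I fix $I\in\mathcal{D}$ and use the Haar expansion
\[
\frac{\mathbf{1}_{I}}{\sqrt{|I|_{\sigma}}}=\sum_{J\in\mathcal{D},\,J\supsetneq I}c_{J}\,h_{J}^{\sigma},\qquad |c_{J}|\lesssim\sqrt{\frac{|I|_{\sigma}}{|J|_{\sigma}}},
\]
where the estimate on $c_{J}$ follows since $h_{J}^{\sigma}$ is constant on $I$ with value of order $|J|_{\sigma}^{-1/2}$. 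Applying $T_{\sigma}^{\lambda}$ term-by-term and using reverse doubling of $\sigma$ (which holds for any doubling measure on $\mathbb{R}^{n}$), I obtain $|I^{(k)}|_{\sigma}\gtrsim(1+\delta)^{k}|I|_{\sigma}$ for the $k$-th ancestor $I^{(k)}$ of $I$, and the triangle inequality collapses into a convergent geometric series bounded by $\mathfrak{H}_{T^{\lambda}}^{\mathcal{D},\operatorname*{glob}}(\sigma,\omega)$. The analogous argument, with the roles of $\sigma$ and $\omega$ swapped, handles the dual cube testing characteristic.

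The heart of the proof, and the main obstacle, is showing
\[
A_{2}^{\lambda}(\sigma,\omega)\lesssim\mathfrak{H}_{T^{\lambda}}^{\mathcal{D},\operatorname*{glob}}(\sigma,\omega)+\mathfrak{H}_{T^{\lambda,\ast}}^{\mathcal{D},\operatorname*{glob}}(\omega,\sigma),
\]
and this is precisely where gradient ellipticity enters. Given any cube $P\subset\mathbb{R}^{n}$, doubling of $\sigma$ and $\omega$ lets me replace $P$ by a comparable $I\in\mathcal{D}$, and then by an ancestor $I$ of which I can split off a pair of children $I_{+},I_{-}$ whose centers are separated in a direction having non-trivial component along the ellipticity direction $\mathbf{v}\in\mathbb{S}^{n-1}$. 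I then choose a Haar function $h_{I}^{\sigma}=\alpha\mathbf{1}_{I_{+}}-\beta\mathbf{1}_{I_{-}}$ from the admissible $2^{n}-1$-dimensional family, normalized so $\alpha|I_{+}|_{\sigma}=\beta|I_{-}|_{\sigma}$ (mean zero) and $\alpha^{2}|I_{+}|_{\sigma}+\beta^{2}|I_{-}|_{\sigma}=1$ (unit norm), so that $\alpha\approx\beta\approx |I|_{\sigma}^{-1/2}$ by doubling. For $x$ in a doubled-neighbor cube $I'$ of $I$ placed in the $\mathbf{v}$-direction from $I$, I expand $K^{\lambda}(x,y)$ around the midpoint of $I_{+}$ and $I_{-}$; the mean-zero condition cancels the zeroth-order term, and the $t$-derivative bound $|\partial_{t}K^{\lambda}(x,x+t\mathbf{v})|\approx |t|^{\lambda-n-1}$ supplied by gradient ellipticity yields the pointwise lower bound
\[
|T_{\sigma}^{\lambda}h_{I}^{\sigma}(x)|\gtrsim\frac{\sqrt{|I|_{\sigma}}}{|I|^{1-\lambda/n}},\qquad x\in I'.
\]
Integrating against $\omega$ over $I'$ and using $|I'|_{\omega}\approx|I|_{\omega}$ gives
\[
\bigl(\mathfrak{H}_{T^{\lambda}}^{\mathcal{D},\operatorname*{glob}}(\sigma,\omega)\bigr)^{2}\geq\|T_{\sigma}^{\lambda}h_{I}^{\sigma}\|_{L^{2}(\omega)}^{2}\gtrsim\frac{|I|_{\sigma}|I|_{\omega}}{|I|^{2-2\lambda/n}}\gtrsim\bigl(A_{2}^{\lambda,I}(\sigma,\omega)\bigr)^{2},
\]
and taking the supremum over $I$ completes the bound. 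The technical subtlety I expect to spend the most effort on is ensuring that the pair $(I_{+},I_{-})$ of dyadic children can be chosen so that their separation has a non-trivial projection on $\mathbf{v}$ (which one resolves by noting $\mathbf{v}\cdot\mathbf{e}_{i}\neq 0$ for some coordinate $\mathbf{e}_{i}$), and that the first-order Taylor error in the kernel expansion is genuinely absorbable by the gradient-elliptic main term rather than competing with it — this is a routine but careful calculation using the size/smoothness bounds (\ref{size and smoothness}) on the remainder against the monotone lower bound (\ref{eq:gradient_elliptic}) on the leading derivative.
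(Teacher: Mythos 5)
Your overall architecture matches the paper's: trivial lower bound, then a two-weight $T1$ theorem plus two reductions, (i) cube testing controlled by Haar testing and (ii) $A_{2}^{\lambda}$ controlled by Haar testing. Step (i) as you propose it (expanding $\mathbf{1}_{I}/\sqrt{|I|_{\sigma}}$ over the ancestors of $I$ and summing a geometric series via reverse doubling) is a valid and in fact cleaner route than the paper's, which instead reuses the mean-zero function from step (ii) together with an absorption argument to control only the \emph{triple} testing characteristic. (You do gloss over the fact that the known $T1$ theorem is stated with local testing over \emph{all} cubes $\mathcal{P}^{n}$, so a separate covering/absorption argument, the paper's Proposition \ref{single}, is still needed to pass to a single dyadic grid; but that is a citation-level issue.)

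The genuine gap is in step (ii), which you rightly call the heart of the proof. Your configuration — a single Haar function $h_{I}^{\sigma}=\alpha\mathbf{1}_{I_{+}}-\beta\mathbf{1}_{I_{-}}$ on two children of $I$, observed from a neighboring cube $I'$ — cannot work, for two reasons. First, with $I'$ adjacent to $I$ one has $|y-c|\approx|x-c|\approx\ell(I)$, so the second-order Taylor remainder $O\bigl(|y-c|^{2}|x-c|^{\lambda-n-2}\bigr)$ is of exactly the same size $\ell(I)^{\lambda-n}$ as the first-order main term; there is no small parameter, so the remainder is not "absorbable" — it competes on equal footing. Second, and more fundamentally in dimension $n\geq 2$: gradient ellipticity only bounds $\mathbf{v}\cdot\nabla_{2}K^{\lambda}(x,c)$ from below, while the components of $\nabla_{2}K^{\lambda}(x,c)$ orthogonal to $\mathbf{v}$ are merely bounded above by $C_{\operatorname*{CZ}}|x-c|^{\lambda-n-1}$ and can have either sign. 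Since the directions $(y-c)/|y-c|$ for $y$ ranging over $I_{+}\cup I_{-}$ sweep out a solid angle of aperture $\approx 1$ (and the separation vector of the children is a grid direction, not $\mathbf{v}$), the quantity $(y-c)\cdot\nabla_{2}K^{\lambda}(x,c)$ need not have a fixed sign on $I_{+}$, and the uncontrolled orthogonal components can cancel the guaranteed $\mathbf{v}$-component; having $\mathbf{v}\cdot\mathbf{e}_{i}\neq 0$ does not rescue this. The paper's fix is structural, not technical: it takes the observation cube $J$ at distance $\frac{1}{\delta}\ell(I)$ from $I$ inside the narrow cone $c_{I}+S(\mathbf{v},\delta)$, and replaces the single Haar function by the mean-zero function $\varphi=\frac{1}{|L|_{\sigma}}\mathbf{1}_{L}-\frac{1}{|K|_{\sigma}}\mathbf{1}_{K}$ supported on two \emph{small} $m$-th generation descendants $K,L$ of $I$ separated by $\approx\ell(I)$ along $\mathbf{v}$; only then are all relevant directions within $O(\delta)$ of $\mathbf{v}$ and the errors smaller than the main term by a factor $\delta$. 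The price is that $\varphi$ is no longer a single Haar function but a bounded linear combination of Haar functions at generations down to $m-1$ — which is still fine for bounding by $\mathfrak{H}_{T^{\lambda}}^{\mathcal{D},\operatorname*{glob}}$, but it is a different construction that your proposal does not contain.
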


This theorem follows immediately from the following two results, the former of which is a special case of the main theorem from \cite{LaWi} (see also \cite{SaShUr9}), and the latter of which is the main focus of this paper.

\begin{theorem}
[\cite{LaWi, SaShUr9}]\label{T1_L2_Riesz} Assume that $\sigma$ and $\omega$ are doubling measures on $\mathbb{R}^{n}$, and let $0 \leq \lambda < n$. If $\lambda \neq 1$, then a $T1$ theorem holds for $\mathbf{R}^{\lambda,n}$, i.e., \begin{equation}
\mathfrak{N}_{\mathbf{R}^{\lambda,n}}\left(  \sigma,\omega\right)  \approx   A_{2}^{\lambda}\left(  \sigma,\omega\right)  +\mathfrak{T}%
_{\mathbf{R}^{\lambda,n}}^{\operatorname{loc}}\left(  \sigma,\omega\right)
+\mathfrak{T}_{\mathbf{R}^{\lambda,n}}^{\operatorname{loc}%
}\left(  \omega,\sigma\right)    \label{bound} \,  .
\end{equation}
\end{theorem}

\begin{theorem}
\label{main''}Let $\sigma$ and $\omega$ be doubling
measures on $\mathbb{R}^{n}$, let $\mathcal{D}$ is a dyadic grid on $\mathbb{R}%
^{n}$, and let $T^{\lambda}$ be a gradient elliptic Calder\'on-Zygmund operator of fractional order $\lambda$. If
\[
\mathfrak{N}_{T^{\lambda}}\left(  \sigma,\omega\right)  \approx A_2 ^{\lambda} (\sigma, \omega)+ \mathfrak{T}%
_{T^{\lambda}}^{\operatorname*{loc}}\left(  \sigma,\omega\right)
+\mathfrak{T}_{T^{\lambda}}^{\operatorname*{loc}}\left(
\omega,\sigma\right)
\]then%
\[
\mathfrak{N}_{T^{\lambda}}\left(  \sigma,\omega\right)  \approx\mathfrak{H}%
_{T^{\lambda}}^{\mathcal{D},\operatorname*{glob}}\left(  \sigma,\omega\right)
+\mathfrak{H}_{T^{\lambda}}^{\mathcal{D},\operatorname*{glob}}\left(
\omega,\sigma\right)  \ .
\]
\end{theorem}

The reason that our main Theorem only concerns the fractional Riesz transform is that this is one of the few operators for which a $T1$ theorem like Theorem \ref{T1_L2_Riesz} is known.

To prove Theorem \ref{main''}, we begin with the following result, whose proof encapsulates the key new ideas of this paper. Note that this result holds for a much wider class of operators than the vector Riesz transforms.

\begin{theorem}
\label{Haar}If $\sigma$ and $\omega$ are doubling measures on $\mathbb{R}^{n}%
$, $\mathcal{D}$ is a dyadic grid on $\mathbb{R}^{n}$, $0\leq\lambda<n$, and
$T^{\lambda}$ is a gradient elliptic $\lambda$-fractional Calder\'{o}n-Zygmund
operator on $\mathbb{R}^n$, then%
\[
\mathfrak{T}_{T^{\lambda}}^{\mathcal{D},\operatorname*{trip}}\left(
\sigma,\omega\right)  +A_{2}^{\lambda}\left(  \sigma,\omega\right)
\lesssim\mathfrak{H}_{T^{\lambda}}^{\mathcal{D},\operatorname*{glob}}\left(
\sigma,\omega\right)  \ .
\]

\end{theorem}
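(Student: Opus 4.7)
The plan is to establish the two constituent inequalities $A_2^\lambda(\sigma,\omega) \lesssim \mathfrak{T}_{T^\lambda}^{\mathcal{D},\operatorname*{trip}}(\sigma,\omega) \lesssim \mathfrak{H}_{T^\lambda}^{\mathcal{D},\operatorname*{glob}}(\sigma,\omega)$ separately. The first is a classical Stein-ellipticity argument for cubes (Stein ellipticity is implied by gradient ellipticity via integration of the derivative lower bound), while the second is a weighted Haar expansion of $\mathbf{1}_I$ over ancestor cubes combined with the doubling property of $\sigma$.

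For the Haar bound on cube testing, fix $I \in \mathcal{D}$ and expand $\mathbf{1}_I$ in the weighted Haar basis of $L^2(\sigma)$. For any $J \in \mathcal{D}$ with $J\cap I=\emptyset$ or $J\subseteq I$, the function $\mathbf{1}_I$ is constant on the support of $h_J^{\sigma,\gamma}$ and $\int h_J^{\sigma,\gamma}d\sigma=0$, so the pairing vanishes and only strict ancestors $J \supsetneq I$ contribute. For such $J$ with dyadic child $J' \supseteq I$, $\langle \mathbf{1}_I, h_J^{\sigma,\gamma}\rangle_\sigma = c_{J',\gamma}\,|I|_\sigma$, where $c_{J',\gamma}$ is the constant value of $h_J^{\sigma,\gamma}$ on $J'$; the $L^2(\sigma)$-normalization together with doubling of $\sigma$ yields $|c_{J',\gamma}| \leq |J'|_\sigma^{-1/2} \lesssim |J|_\sigma^{-1/2}$. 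The triangle inequality and the definition of $\mathfrak{H}_{T^\lambda}^{\mathcal{D},\operatorname*{glob}}$ then give
\[
\|T_\sigma^\lambda \mathbf{1}_I\|_{L^2(\omega)} \leq |I|_\sigma \sum_{J \supsetneq I} \sum_\gamma |c_{J',\gamma}|\,\|T_\sigma^\lambda h_J^{\sigma,\gamma}\|_{L^2(\omega)} \lesssim \mathfrak{H}_{T^\lambda}^{\mathcal{D},\operatorname*{glob}}(\sigma,\omega)\,|I|_\sigma \sum_{J \supsetneq I} |J|_\sigma^{-1/2}.
\]
Since doubling measures on $\mathbb{R}^n$ are also reverse doubling, $|J^{(k)}|_\sigma \gtrsim (1+\delta)^k |I|_\sigma$ for the $k$-th ancestor $J^{(k)}$ of $I$, the ancestor sum converges geometrically and is $\lesssim |I|_\sigma^{-1/2}$; dividing by $\sqrt{|I|_\sigma}$ delivers $\mathfrak{T}_{T^\lambda}^{\mathcal{D},\operatorname*{glob}} \lesssim \mathfrak{H}_{T^\lambda}^{\mathcal{D},\operatorname*{glob}}$, and a fortiori the triple testing bound.

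For the $A_2^\lambda$ estimate via cube testing, first integrate the gradient-ellipticity lower bound $|\partial_t K^\lambda(x, x+t\mathbf{v})| \approx |t|^{\lambda-n-1}$ from $t$ to $\infty$, using the decay of $K^\lambda$ at infinity from (\ref{size and smoothness}), to obtain Stein ellipticity $|K^\lambda(x, x+t\mathbf{v})| \gtrsim |t|^{\lambda-n}$. Now fix $I \in \mathcal{P}^n$ (reducing to $I \in \mathcal{D}$ by a standard shifted-grid argument) and choose a cube $J \subset 3I$ of sidelength $\ell(I)$ positioned so that $y-x$ lies in a narrow cone about $-\mathbf{v}$ uniformly for $(x,y) \in J \times I$; monotonicity of $K^\lambda(x,\cdot)$ along the $\mathbf{v}$-axis then pins down the sign of $K^\lambda(x,y)$, and Stein ellipticity gives $|K^\lambda(x,y)| \gtrsim \ell(I)^{\lambda-n}$. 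Hence $|T_\sigma^\lambda \mathbf{1}_I(x)| \gtrsim |I|_\sigma\,\ell(I)^{\lambda-n}$ uniformly on $J$, and doubling of $\omega$ (to replace $|J|_\omega$ by $|I|_\omega$) yields
\[
\bigl[\mathfrak{T}_{T^\lambda}^{\mathcal{D},\operatorname*{trip}}(\sigma,\omega)\bigr]^2 |I|_\sigma \geq \int_J |T_\sigma^\lambda \mathbf{1}_I|^2\, d\omega \gtrsim |J|_\omega\, |I|_\sigma^2\, \ell(I)^{2(\lambda-n)} \approx \frac{|I|_\sigma^2\, |I|_\omega}{|I|^{2(1-\lambda/n)}},
\]
which, after cancelling $|I|_\sigma$ and supremizing over $I$, gives the $A_2^\lambda$ bound.

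The principal technical obstacle is the geometric construction in the $A_2^\lambda$ step: locating a cube $J \subset 3I$ for which $y-x$ lies uniformly in a narrow cone about $-\mathbf{v}$. This is immediate when $\mathbf{v}$ is axis-aligned, but for a generic diagonal $\mathbf{v}$ one must balance the position and size of $J$ within $3I$ so that the cone stays narrow enough for monotonicity to pin down the sign of $K^\lambda$ on $J \times I$ while the kernel lower bound remains of order $\ell(I)^{\lambda-n}$; the upper size and smoothness estimates (\ref{size and smoothness}) are invoked to absorb transverse perturbations of $y-x$ away from the $\mathbf{v}$-direction.
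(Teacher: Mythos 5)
Your route is genuinely different from the paper's, and for the cube--testing half it is both cleaner and stronger. The paper never expands $\mathbf{1}_{I}$ over its Haar ancestors; instead it works with the mean--zero function $\varphi=\frac{1}{\left\vert L\right\vert _{\sigma}}\mathbf{1}_{L}-\frac{1}{\left\vert K\right\vert _{\sigma}}\mathbf{1}_{K}$ supported in $I$ (a \emph{finite} combination of Haar functions at a fixed depth $m$ inside $I$), derives the $A_{2}^{\lambda}$ bound from a pointwise lower bound on $\left\vert T_{\sigma}^{\lambda}\varphi\right\vert $ on a well--separated cube $J$, and then recovers \emph{triple} cube testing by expanding $\int_{3L}\left\vert T_{\sigma}^{\lambda}\varphi\right\vert ^{2}d\omega$, estimating the cross term $\int_{3L}\left(  T_{\sigma}^{\lambda}\mathbf{1}_{L}\right)  \left(  T_{\sigma}^{\lambda}\mathbf{1}_{K}\right)  d\omega$ via the already--proved $A_{2}^{\lambda}$ bound, and absorbing. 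Your telescoping expansion of $\mathbf{1}_{I}$ over strict ancestors, with coefficients $\left\vert \left\langle \mathbf{1}_{I},h_{J}^{\sigma,\gamma}\right\rangle \right\vert \lesssim\left\vert I\right\vert _{\sigma}\left\vert J\right\vert _{\sigma}^{-1/2}$ decaying geometrically by dyadic reverse doubling (the parent of $Q$ contains a sibling of $\sigma$--mass comparable to that of $Q$, so $\left\vert \pi Q\right\vert _{\sigma}\geq\left(  1+\delta\right)  \left\vert Q\right\vert _{\sigma}$), yields the stronger conclusion $\mathfrak{T}_{T^{\lambda}}^{\mathcal{D},\operatorname*{glob}}\left(  \sigma,\omega\right)  \lesssim\mathfrak{H}_{T^{\lambda}}^{\mathcal{D},\operatorname*{glob}}\left(  \sigma,\omega\right)  $ --- global, not merely triple, cube testing --- using no ellipticity and no absorption. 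You should add a word justifying the interchange of $T_{\sigma}^{\lambda}$ with the infinite sum: for an admissible truncation the kernel is bounded with compact band support, so the partial sums converge pointwise everywhere and Fatou's lemma applies to the $L^{2}\left(  \omega\right)  $ norm. Your $A_{2}^{\lambda}$ step is likewise the classical one, applied to $\mathbf{1}_{I}$ itself rather than to $\varphi$, and the sign argument (monotonicity plus decay at infinity forces a constant sign and the lower bound $\left\vert K^{\lambda}\left(  x,y\right)  \right\vert \gtrsim\left\vert x-y\right\vert ^{\lambda-n}$) is sound on a sufficiently narrow cone.

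The one genuine misstep is routing the $A_{2}^{\lambda}$ bound through $\mathfrak{T}_{T^{\lambda}}^{\mathcal{D},\operatorname*{trip}}$ by insisting that $J\subset3I$. That cannot work in general: the perturbed ellipticity (\ref{perturb}) holds only in a cone of aperture $\delta_{0}\sim c_{1}/C_{\operatorname*{CZ}}$ about $\mathbf{v}$, while for $\left(  x,y\right)  \in J\times I$ with $\ell\left(  J\right)  =\ell\left(  I\right)  $ and $J\subset3I$ the directions $\frac{y-x}{\left\vert y-x\right\vert }$ fill a cone of aperture $\gtrsim\operatorname{diam}\left(  I\right)  /\operatorname*{dist}\left(  I,J\right)  \gtrsim1$, no matter how $J$ is positioned and whether or not $\mathbf{v}$ is axis--aligned; so for kernels with small ellipticity--to--$C_{\operatorname*{CZ}}$ ratio the sign of $K^{\lambda}\left(  x,y\right)  $ cannot be pinned down on all of $J\times I$, and the size and smoothness estimates cannot absorb an order--one transverse perturbation. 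The needed separation is $\operatorname*{dist}\left(  I,J\right)  \gtrsim\delta_{0}^{-1}\ell\left(  I\right)  $ --- exactly the configuration the paper uses --- which places $J$ far outside $3I$. This is harmless for your proof, because your first half already controls the \emph{global} characteristic: simply bound $\int_{J}\left\vert T_{\sigma}^{\lambda}\mathbf{1}_{I}\right\vert ^{2}d\omega\leq\left\Vert T_{\sigma}^{\lambda}\mathbf{1}_{I}\right\Vert _{L^{2}\left(  \omega\right)  }^{2}\leq\mathfrak{T}_{T^{\lambda}}^{\mathcal{D},\operatorname*{glob}}\left(  \sigma,\omega\right)  ^{2}\left\vert I\right\vert _{\sigma}$ with no constraint on the location of $J$, and then pass from dyadic cubes to $\mathcal{P}^{n}$ by doubling. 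With that rerouting the argument is complete, and it bypasses the paper's $\varphi$ construction and absorption scheme entirely.
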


\begin{proof}
We first consider the $A_{2}^{\lambda}\left(  \sigma,\omega\right)  $
characteristic. Let $\mathbf{v}$ be a unit vector for which the
gradient ellipticity condition (\ref{eq:gradient_elliptic}) holds for our
operator $T^{\lambda}$. Then there exists $\delta_{0}>0$ which only depends on the gradient ellipticity constant in \eqref{eq:gradient_elliptic}, such that
\begin{equation}
\left\vert \frac{\partial}{\partial t}K^{\lambda}\left(  x,x+t\mathbf{w}%
\right)  \right\vert =\left\vert \psi_{x,\mathbf{w}}^{\prime}\left(  t\right)
\right\vert \approx\left\vert t\right\vert ^{\lambda-n-1},\ \ \ \ \ \text{for
all}\left\vert \mathbf{w}-\mathbf{v}\right\vert <\delta_{0}.\label{perturb}%
\end{equation}
Indeed, first we have
\begin{align*}
&  \frac{\partial}{\partial t}K^{\lambda}\left(  x,x+t\mathbf{w}\right)
-\frac{\partial}{\partial t}K^{\lambda}\left(  x,x+t\mathbf{v}\right)
=\left(  \mathbf{w}\cdot\nabla_{2}\right)  K^{\lambda}\left(  x,x+t\mathbf{w}%
\right)  -\left(  \mathbf{v}\cdot\nabla_{2}\right)  K^{\lambda}\left(
x,x+t\mathbf{v}\right)  \\
&  =\left(  \left(  \mathbf{w}-\mathbf{v}\right)  \cdot\nabla_{2}\right)
K^{\lambda}\left(  x,x+t\mathbf{w}\right)  +\left(  \mathbf{v}\cdot\nabla
_{1}\right)  \left(  K^{\lambda}\left(  x,x+t\mathbf{w}\right)  -K^{\lambda
}\left(  x,x+t\mathbf{v}\right)  \right)  .
\end{align*}
By the Calder\'{o}n-Zygmund size and smoothness conditions 
(\ref{size and smoothness}), we have
\begin{align*}
\left\vert \left(  \left(  \mathbf{w}-\mathbf{v}\right)  \cdot\nabla
_{2}\right)  K^{\lambda}\left(  x,x+t\mathbf{w}\right)  \right\vert  &  \leq
C_{\operatorname*{CZ}}\left\vert \mathbf{w}-\mathbf{v}\right\vert \left\vert
t\right\vert ^{\lambda-n-1},\\
\left\vert \left(  \mathbf{v}\cdot\nabla_{1}\right)  \left(  K^{\lambda
}\left(  x,x+t\mathbf{w}\right)  -K^{\lambda}\left(  x,x+t\mathbf{v}\right)
\right)  \right\vert  &  \leq C_{\operatorname*{CZ}}\left\vert t\mathbf{w}%
-t\mathbf{v}\right\vert \left\vert t\right\vert ^{\lambda-n-2}.
\end{align*}
Thus by gradient ellipticity, we get
\[
\left\vert \frac{\partial}{\partial t}K^{\lambda}\left(  x,x+t\mathbf{w}%
\right)  -\frac{\partial}{\partial t}K^{\lambda}\left(  x,x+t\mathbf{v}%
\right)  \right\vert \leq C_{\operatorname*{CZ}}\left\vert \mathbf{w}%
-\mathbf{v}\right\vert \left\vert t\right\vert ^{\lambda-n-1}\leq\frac{1}%
{2}\left\vert \frac{\partial}{\partial t}K^{\lambda}\left(  x,x+t\mathbf{v}%
\right)  \right\vert ,
\]
for $\left\vert \mathbf{w}-\mathbf{v}\right\vert <\delta_{0}$ with $\delta
_{0}>0$ sufficiently small, and (\ref{perturb}) follows.

Now given $\delta>0$, let $S\left(  \mathbf{v},\delta\right)  $ denote the
conical sector
\[
S\left(  \mathbf{v},\delta\right)  \equiv\left\{  z\in\mathbb{R}%
^{n}:\left\vert \frac{z}{\left\vert z\right\vert }-\mathbf{v}\right\vert
<\delta\right\}  \text{.}%
\]
Let $0<\delta\leq\delta_{0}$ be a small constant that we will fix later. Then
there is $m=m(\delta)\in\mathbb{N}$ with the following property. Given dyadic
cubes $I$ and $J$ of equal side length $\ell\left(  I\right)  $ with
$\operatorname*{dist}\left(  I,J\right)  \approx\frac{1}{\delta}\ell\left(
I\right)  $, and centers $c_{I}$ and $c_{J}$, such that $J\subset
c_{I}+S\left(  \mathbf{v},\delta\right)  $, then there exist two dyadic cubes
$K,L\in\mathfrak{C}_{\mathcal{D}}^{\left(  m\right)  }\left(  I\right)  $ such
that $\operatorname*{dist}\left(  3K,3L\right)  \approx\ell\left(  I\right)  $
and $L\subset c_{K}+S\left(  \mathbf{v},\delta\right)  $. See Figure
\ref{cubes}.%
\begin{figure}[ht] 
  \fbox{\includegraphics[width=0.75\linewidth]{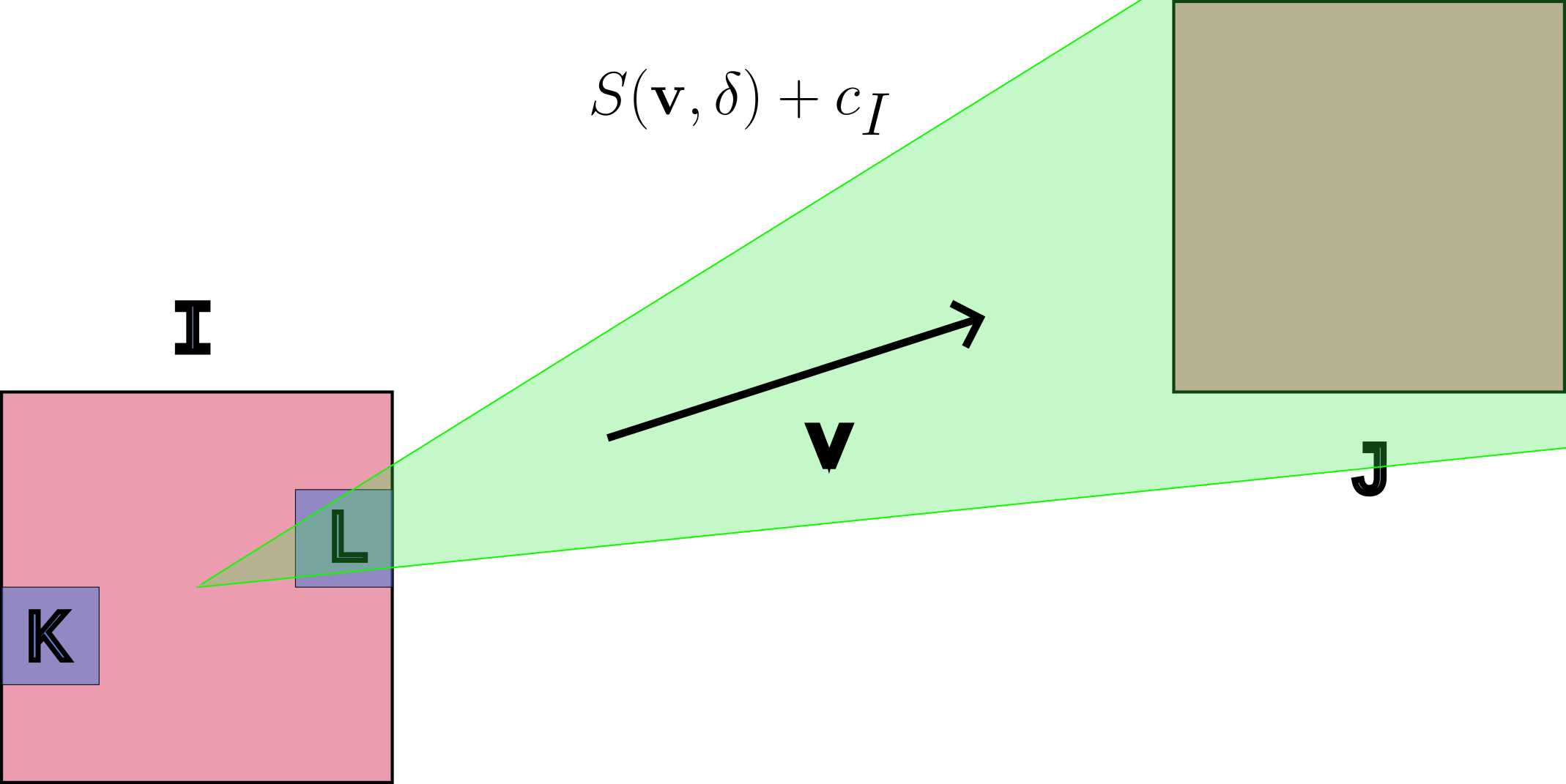}}
	\caption{Cubes $I, J$ (and $K,L$) configured along the conical sector $S(\mathbf{v}, \delta) + c_I$ (or $S(\mathbf{v}, \delta) + c_K$). }
\label{cubes}
\end{figure}
Set $c\equiv\frac{c_{K}+c_{L}}{2}$ and consider
\begin{equation}
\varphi\equiv\frac{1}{\left\vert L\right\vert _{\sigma}}\mathbf{1}_{L}%
-\frac{1}{\left\vert K\right\vert _{\sigma}}\mathbf{1}_{K} \, .\label{def phi}
\end{equation}
Since $\varphi$ has $\sigma$-mean zero, i.e., $\int\varphi\,d\sigma=0$, then for $x\in J$ we have%
\begin{align}
&  T_{\sigma}^{\lambda}\varphi\left(  x\right)  =\int_{I}K^{\lambda}\left(
x,y\right)  \left(  \frac{1}{\left\vert L\right\vert _{\sigma}}\mathbf{1}%
_{L}\left(  y\right)  -\frac{1}{\left\vert K\right\vert _{\sigma}}%
\mathbf{1}_{K}\left(  y\right)  \right)  d\sigma\left(  y\right)
\label{T phi pointwise}\\
&  =\int_{I}\left(  K^{\lambda}\left(  x,y\right)  -K^{\lambda}\left(
x,c\right)  \right)  \left(  \frac{1}{\left\vert L\right\vert _{\sigma}%
}\mathbf{1}_{L}\left(  y\right)  -\frac{1}{\left\vert K\right\vert _{\sigma}%
}\mathbf{1}_{K}\left(  y\right)  \right)  d\sigma\left(  y\right)  \, .\nonumber
\end{align}
We may assume
\begin{equation}
\label{eq:assumption}\left(  \left(  x-c \right)  \cdot\nabla_{2}\right)
K^{\lambda}\left(  x,c\right)  >0
\end{equation}
by (\ref{perturb}) since $x-c\in S\left(  \mathbf{v},\delta_{0}\right)  $ for
$\delta= \delta(\delta_{0})$ sufficiently small (the case  $<0$ is similar). Then we claim that for $x \in J$ and $y
\in L \cup K$, we have
\begin{equation}
\operatorname{sgn} \left\{  K^{\lambda}\left(  x,y\right)  -K^{\lambda}\left(
x,c\right)  \right\}  = \operatorname{sgn}\left(  y-c\right)  \cdot\mathbf{v}
\, , \quad\text{ and } \left|  K^{\lambda}\left(  x,y\right)  -K^{\lambda
}\left(  x,c\right)  \right|  \approx\frac{1}{\left\vert I\right\vert
^{1-\frac{\lambda}{n}}}. \label{sign}%
\end{equation}

To see this, first note that $\left\vert \frac{y-c}{\left\vert y-c\right\vert
}-\mathbf{v}\right\vert <C\delta$ when $y\in L$, and $\left\vert \frac
{y-c}{\left\vert y-c\right\vert }+\mathbf{v}\right\vert <C\delta$ when $y\in
K$. Note then that $\left\vert \frac{y-c}{\left\vert y-c\right\vert }-\left(
\pm\mathbf{v}\right)  \right\vert <C\delta$ implies that $\operatorname{sgn}%
(y-c)\cdot\mathbf{v}=\pm1$ for $\delta$ sufficiently small, meaning that
\[
\operatorname{sgn}(y-c)\cdot\mathbf{v}=%
\begin{cases}
+1 & \text{ if }y\in L\\
-1 & \text{ if }y\in K
\end{cases}
\,.
\]
Now let $\mathbf{w}\equiv\frac{x-c}{\left\vert x-c\right\vert }$. If
$y\in L$ and $x\in J$, then $\left\vert \frac{y-c}{\left\vert
y-c\right\vert }-\mathbf{w}\right\vert <C\delta$. Under these assumptions, we
compute
\begin{align*}
	& K^{\lambda}\left(  x,y\right)  -K^{\lambda}\left(  x,c\right) =\int_{0}^{1}\frac{\partial}{\partial t}K^{\lambda}\left(  x,c  +t\left(  y-c\right)  \right)  dt =\left\vert y-c\right\vert \int_{0}^{1}\left[  \left(  \frac{y-c}{\left\vert y-c\right\vert }\right)  \cdot\nabla_{2}K^{\lambda}\left(
x,c  +t\left(  y-c\right)  \right)  \right]  dt\\
	=&\left\vert y-c\right\vert \int_{0}^{1}\left[  \left(  \frac{y-c}%
{\left\vert y-c\right\vert }-\mathbf{w}\right)  \cdot\nabla_{2}K^{\lambda
}\left(  x,c  +t\left(  y-c\right)  \right)  \right]  dt\\
&  +\left\vert y-c\right\vert \int_{0}^{1}\mathbf{w}\cdot\left[  \nabla
_{2}K^{\lambda}\left(  x,c  +t\left(  y-c\right)  \right)
-\nabla_{2}K^{\lambda}\left(  x,c\right)  \right]  dt  +\left\vert y-c\right\vert \int_{0}^{1}\left[  \mathbf{w}\cdot\nabla
_{2}K^{\lambda}\left(  x,c\right)  \right]  dt\\
&  \equiv i+ii+iii\,.
\end{align*}
We show that $iii$ is the main term, while $i$ and $i i$ are negligible
errors if $\delta$ sufficiently small. Using
Cauchy-Schwarz and the Calder\'{o}n-Zygmund size and smoothness estimates
(\ref{size and smoothness}), we see $\left | i \right|$ is at most
\[
\left\vert y-c\right\vert \left\vert \int_{0}%
^{1}\left[  \left(  \frac{y-c}{\left\vert y-c\right\vert }-\mathbf{w}\right)
\cdot\nabla_{2}K^{\lambda}\left(  x,c  +t\left(
y-c\right)  \right)  \right]  dt\right\vert \leq CC_{\operatorname*{CZ}}%
\ell\left(  I\right)  \delta \int\limits_{0} ^{1} \frac{1}{\left\vert \left(  c-x\right)  +t\left(
y-c\right)  \right\vert ^{n-\lambda+1}} \,  dt \, ,
\]
Because $t\in\lbrack0,1]$ and $|y-c|$ is small relative to $ |x-c|$, then 
\begin{equation}\label{eq:comp_int}
	\left\vert \left(
c-x\right)  +t\left(  y-c\right)  \right\vert \approx\left\vert c-x\right\vert
\approx\frac{1}{\delta}\ell\left(  I\right) \, , 
\end{equation}
and so 
\[
\left\vert i\right\vert \leq CC_{\operatorname*{CZ}}\delta^{n-\lambda+2}%
\frac{1}{\ell\left(  I\right)  ^{n-\lambda}}\,.
\]
To estimate $ii$, we compute
\begin{align*}
	&  \left\vert \nabla_{2}K^{\lambda}\left(  x,c+t\left ( y-c \right )
\right)  -\nabla_{2}K^{\lambda}\left(  x,c\right)  \right\vert =\left\vert
	\int_{0}^{t}\frac{d}{ds}\nabla_{2}K^{\lambda}\left(  x,c+s \left (y-c\right)  \right)  ds\right\vert \\
&  \leq\left\vert y-c\right\vert \int_{0}^{t}\left\vert \nabla_{2}%
^{2}K^{\lambda}\left(  x,c+s \left (y-c\right)  \right)  \right\vert
ds\leq C_{\operatorname*{CZ}}\left\vert y-c\right\vert \int_{0}^{t}\frac
{ds}{\left\vert  c-x +s \left (y-c\right)\right\vert ^{n-\lambda+2}}\leq CC_{\operatorname*{CZ}%
}\ell\left(  I\right)  \frac{\delta^{n-\lambda+2}}{\ell\left(  I\right)
^{n-\lambda+2}}\,
\end{align*}
where in the last step we used \eqref{eq:comp_int} with $t$ replaced by $s$. Hence $|ii|$ is at most 
\begin{align*}
 \left\vert y-c\right\vert \left\vert \int_{0}^{1}\mathbf{w}%
\cdot\left[  \nabla_{2}K^{\lambda}\left(  x,c  +t\left(
y-c\right)  \right)  -\nabla_{2}K^{\lambda}\left(  x,c\right)  \right]
dt\right\vert \leq C\ell\left(  I\right)  CC_{\operatorname*{CZ}}\ell\left(  I\right)
\frac{\delta^{n-\lambda+2}}{\ell\left(  I\right)  ^{n-\lambda+2}} =CC_{\operatorname*{CZ}}\frac{\delta^{n-\lambda+2}}{\ell\left(  I\right)
^{n-\lambda}}\,.
\end{align*}
On the other hand using (\ref{eq:gradient_elliptic}), we estimate
$\left\vert iii\right\vert $ from below by%
\begin{align*}
&  \left\vert y-c\right\vert \int_{0}^{1}\left[  \mathbf{w}\cdot\nabla
_{2}K^{\lambda}\left(  x,c\right)  \right]  dt=\left\vert y-c\right\vert
\left[  \mathbf{w}\cdot\nabla_{2}K^{\lambda}\left(  x,c\right)  \right] =\left\vert y-c\right\vert \left. \frac{d}{dt}K^{\lambda}\left(  x,x+t\mathbf{w}%
	\right)  \right \vert_{t=-\left\vert x-c\right\vert } \\ &\geq C_{\operatorname{grad}}%
\ell\left(  I\right)  \frac{1}{\left\vert x-c\right\vert ^{n-\lambda+1}}  \approx C_{\operatorname{grad}}\ell\left(  I\right)  \left(  \frac{\delta
}{\ell\left(  I\right)  }\right)  ^{n-\lambda+1}=C_{\operatorname{grad}}%
\delta^{n-\lambda+1}\left(  \frac{1}{\ell\left(  I\right)  }\right)
^{n-\lambda},
\end{align*}
since
\[
\left(  c-x\right)  \in S\left(  \mathbf{v},\delta\right)
,\ \ \ \ \ \text{for }0\leq t\leq1.
\]
Altogether then%
\begin{equation}
\left\vert i\right\vert +\left\vert ii\right\vert \leq\frac{1}{2}\left\vert
iii\right\vert \label{eq:negligible}%
\end{equation}
since%
\[
CC_{\operatorname*{CZ}}\frac{\delta^{n-\lambda+2}}{\ell\left(  I\right)
^{n-\lambda}}\leq\frac{1}{4}C_{\operatorname{grad}}\delta^{n-\lambda+1}%
\frac{1}{\ell\left(  I\right)  ^{n-\lambda}}%
\]
provided $\delta\leq\frac{1}{4}\frac{C_{\operatorname{grad}}}%
{CC_{\operatorname*{CZ}}}$. Thus the $\approx$ part of (\ref{sign}) holds if
we take $\delta=\min\left\{  \frac{1}{4}\frac{C_{\operatorname{grad}}%
}{CC_{\operatorname*{CZ}}},\delta_{0}\right\}  $. To see the equality of signs in (\ref{sign}), since $i$ and $ii$ are negligible by (\ref{eq:negligible}), then the
left side of (\ref{sign}) and $iii$ share the same sign. And (\ref{eq:assumption}) implies that $iii$ is positive, completing the proof of (\ref{sign}) when $y\in L$. The case of $y\in K$ is similar.

Thus the integrand in (\ref{T phi pointwise}) doesn't change sign, and 
$T^{\lambda}\varphi\left(  x\right)  $ is of one fixed sign for all $x\in J$.
Thus by \eqref{sign}, we have
\begin{align}
\left\vert T^{\lambda}\varphi\left(  x\right)  \right\vert  &  =\int
_{I}\left\vert K^{\lambda}\left(  x,y\right)  -K^{\lambda}\left(  x,c\right)
\right\vert \left\vert \frac{1}{\left\vert L\right\vert _{\sigma}}%
\mathbf{1}_{L}\left(  y\right)  -\frac{1}{\left\vert K\right\vert _{\sigma}%
}\mathbf{1}_{K}\left(  y\right)  \right\vert d\sigma\left(  y\right)
\label{eq:init_nondegen}\\
&  \geq\int_{I}\left\vert K^{\lambda}\left(  x,y\right)  -K^{\lambda}\left(
x,c\right)  \right\vert \left\vert \frac{1}{\left\vert L\right\vert _{\sigma}%
}\mathbf{1}_{L}\left(  y\right)  \right\vert d\sigma\left(  y\right)
\nonumber  \gtrsim\frac{1}{\ell\left(  I\right)  ^{n-\lambda}}\,,\nonumber
\end{align}
and hence
\begin{equation}\label{eq:nondege_-1}
\left\vert \left\langle T^{\lambda}\varphi,\mathbf{1}_{J}\right\rangle
_{\omega}\right\vert \gtrsim\frac{\left\vert J\right\vert _{\omega}%
}{\left\vert J\right\vert ^{1-\frac{\lambda}{n}}}\ .
\end{equation}

Now we note that $\frac{\varphi}{\left\Vert \varphi\right\Vert _{L^{2}\left(
\sigma\right)  }}$ is supported in $I$ with $\sigma$-mean zero, and is
constant on the dyadic $m$-grandchildren of $I$. The vector space of
such functions is the linear span of the Haar wavelets $\left\{  h_{M}%
^{\sigma,\tau}\right\}  _{M\in\mathfrak{C}_{\mathcal{D}}^{\left[  m-1\right]
}\left(  I\right)  ,\tau\in\Gamma_{M,n}^{\sigma}}$, where $\mathfrak{C}%
_{\mathcal{D}}^{\left[  m-1\right]  }\left(  I\right)  $ denotes all dyadic
grandchildren of $I$ down to level $m-1$. Since $\varphi$ belongs to this
vector space, we have the key identity,%
\begin{equation}
\frac{\varphi}{\left\Vert \varphi\right\Vert _{L^{2}\left(  \sigma\right)  }%
}=\sum_{M\in\mathfrak{C}_{\mathcal{D}}^{\left[  m-1\right]  }\left(  I\right)
,\gamma\in\Gamma_{M,n}^{\sigma}}\left\langle \frac{\varphi}{\left\Vert
\varphi\right\Vert _{L^{2}\left(  \sigma\right)  }},h_{M}^{\sigma,\gamma
}\right\rangle_{\sigma} h_{M}^{\sigma,\gamma}\ ,\label{key identity}%
\end{equation}
and so%
\begin{align}
\left\Vert T_{\sigma}^{\lambda}\frac{\varphi}{\left\Vert \varphi\right\Vert
_{L^{2}\left(  \sigma\right)  }}\right\Vert _{L^{2}\left(  \omega\right)  } &
\leq\sum_{M\in\mathfrak{C}_{\mathcal{D}}^{\left(  m-1\right)  }\left(
I\right)  ,\gamma\in\Gamma_{M,n}^{\sigma}}\left\vert \left\langle
\frac{\varphi}{\left\Vert \varphi\right\Vert _{L^{2}\left(  \sigma\right)  }%
},h_{M}^{\sigma,\gamma}\right\rangle_{\sigma} \right\vert \left\Vert T_{\sigma
}^{\lambda}h_{M}^{\sigma,\gamma}\right\Vert _{L^{2}\left(  \omega\right)
}\label{eq:phi_Haar_testing}\\
&  \leq\#\left\{  \left(  M,\gamma\right)  \in\mathfrak{C}_{\mathcal{D}%
}^{\left(  m-1\right)  }\left(  I\right)  \times\Gamma_{M,n}^{\sigma}\right\}
\mathfrak{H}_{T^{\lambda}}^{\mathcal{D},\operatorname*{glob}}\left(
\sigma,\omega\right)  \lesssim\mathfrak{H}_{T^{\lambda}}^{\mathcal{D}%
,\operatorname*{glob}}\left(  \sigma,\omega\right)  ,\nonumber
\end{align}
where we used Cauchy-Schwarz in the before-last inequality. Altogether we have%
\[
\frac{\left\vert J\right\vert _{\omega}}{\left\vert J\right\vert
^{1-\frac{\lambda}{n}}}\lesssim\left\vert \left\langle T_{\sigma}^{\lambda
}\varphi,\mathbf{1}_{J}\right\rangle _{\omega}\right\vert \lesssim\left\Vert
T_{\sigma}^{\lambda}\varphi\right\Vert _{L^{2}\left(  \omega\right)  }%
\sqrt{\left\vert J\right\vert _{\omega}}\lesssim\sqrt{\left\vert J\right\vert
_{\omega}}\left\Vert \varphi\right\Vert _{L^{2}\left(  \sigma\right)
}\mathfrak{H}_{T^{\lambda}}^{\mathcal{D},\operatorname*{glob}}\left(
\sigma,\omega\right)
\]
where $\left\Vert \varphi\right\Vert _{L^{2}\left(  \sigma\right)  }%
=\sqrt{\frac{1}{\left\vert L\right\vert _{\sigma}}+\frac{1}{\left\vert
K\right\vert _{\sigma}}}\approx\frac{1}{\sqrt{\left\vert I\right\vert
_{\sigma}}}$ by doubling, which gives
\[
\sqrt{\frac{\left\vert I\right\vert _{\sigma}\left\vert J\right\vert _{\omega
}}{\left\vert I\right\vert ^{1-\frac{\lambda}{n}}\left\vert J\right\vert
^{1-\frac{\lambda}{n}}}}\lesssim\mathfrak{H}_{T_{\sigma}^{\lambda}%
}^{\mathcal{D},\operatorname*{glob}}\left(  \sigma,\omega\right)  ,
\]
and since $\sigma,\omega$ are doubling, we obtain%
\begin{equation}\label{eq:A2_leq_Haar}
A_{2}^{\lambda}\left(  \sigma,\omega\right)  \lesssim\mathfrak{H}_{T^{\lambda
}}^{\mathcal{D},\operatorname*{glob}}\left(  \sigma,\omega\right)  .
\end{equation}

Now we turn to the triple testing characteristic $\mathfrak{T}_{T^{\lambda}%
}^{\mathcal{D},\operatorname*{trip}}\left(  \sigma,\omega\right)  $. Fix
$L\in\mathcal{D}$ and construct $K,I\in\mathcal{D}$ so that same previous conditions are satisfied for the triple of cubes $\left(  I,K,L\right)  $. Then 
\begin{align*}
\int_{3L}\left\vert T_{\sigma}^{\lambda}\varphi\right\vert ^{2}d\omega &
=\int_{3L}\left\vert \frac{1}{\left\vert L\right\vert _{\sigma}}T_{\sigma
}^{\lambda}\mathbf{1}_{L}-\frac{1}{\left\vert K\right\vert _{\sigma}}%
T_{\sigma}^{\lambda}\mathbf{1}_{K}\right\vert ^{2}d\omega\\
&  =\int_{3L}\left\vert \frac{1}{\left\vert L\right\vert _{\sigma}}T_{\sigma
}^{\lambda}\mathbf{1}_{L}\right\vert ^{2}d\omega+\int_{3L}\left\vert \frac
{1}{\left\vert K\right\vert _{\sigma}}T_{\sigma}^{\lambda}\mathbf{1}%
_{K}\right\vert ^{2}d\omega-2\frac{1}{\left\vert L\right\vert _{\sigma
}\left\vert K\right\vert _{\sigma}}\int_{3L}\left(  T_{\sigma}^{\lambda
}\mathbf{1}_{L}\right)  \left(  T_{\sigma}^{\lambda}\mathbf{1}_{K}\right)
d\omega,
\end{align*}
where by separation of $3L$ and $3K$, and the Calder\'{o}n-Zygmund size and
smoothness estimates, we have%
\begin{align*}
\left\vert \int_{3L}\left(  T_{\sigma}^{\lambda}\mathbf{1}_{L}\right)  \left(
T_{\sigma}^{\lambda}\mathbf{1}_{K}\right)  d\omega\right\vert  &  \lesssim
\int_{3L}\left\vert T_{\sigma}^{\lambda}\mathbf{1}_{L}\right\vert
\frac{\left\vert K\right\vert _{\sigma}}{\left\vert I\right\vert
^{1-\frac{\lambda}{n}}}d\omega\lesssim\frac{\left\vert K\right\vert _{\sigma
}\sqrt{\left\vert 3L\right\vert _{\omega}}}{\left\vert I\right\vert
^{1-\frac{\lambda}{n}}}\sqrt{\int_{3L}\left\vert T_{\sigma}^{\lambda
}\mathbf{1}_{L}\right\vert ^{2}d\omega}\\
&  \lesssim\sqrt{\int_{3L}\left\vert T_{\sigma}^{\lambda}\frac{\mathbf{1}_{L}%
}{\sqrt{\left\vert L\right\vert _{\sigma}}}\right\vert ^{2}d\omega}%
\frac{\left\vert K\right\vert _{\sigma}\sqrt{\left\vert L\right\vert _{\sigma
}\left\vert 3L\right\vert _{\omega}}}{\left\vert I\right\vert ^{1-\frac
{\lambda}{n}}}\lesssim\left\vert K\right\vert _{\sigma}A_{2}^{\lambda}\left(
\sigma,\omega\right)  \sqrt{\int_{3L}\left\vert T_{\sigma}^{\lambda}%
\frac{\mathbf{1}_{L}}{\sqrt{\left\vert L\right\vert _{\sigma}}}\right\vert
^{2}d\omega}.
\end{align*}
Thus, from the previous two displays and (\ref{eq:phi_Haar_testing}) we have%
\begin{align}
&  \int_{3L}\left\vert T_{\sigma}^{\lambda}\frac{\mathbf{1}_{L}}%
{\sqrt{\left\vert L\right\vert _{\sigma}}}\right\vert ^{2}d\omega=\left\vert
L\right\vert _{\sigma}\int_{3L}\left\vert T_{\sigma}^{\lambda}\frac
{\mathbf{1}_{L}}{\left\vert L\right\vert _{\sigma}}\right\vert ^{2}%
d\omega\lesssim\left\vert L\right\vert _{\sigma}\int_{3L}\left\vert T_{\sigma
}^{\lambda}\varphi\right\vert ^{2}d\omega+\frac{2}{\left\vert K\right\vert
_{\sigma}}\int_{3L}\left\vert T_{\sigma}^{\lambda}\mathbf{1}_{L}\right\vert
\left\vert T_{\sigma}^{\lambda}\mathbf{1}_{K}\right\vert d\omega
\label{eq:init_interval_to_Haar_bd}\\
&  \lesssim\mathfrak{H}_{T^{\lambda}}^{\mathcal{D},\operatorname*{glob}%
}\left(  \sigma,\omega\right)  ^{2}\left\vert L\right\vert _{\sigma}\int
_{3L}\left\vert \varphi\right\vert ^{2}d\sigma+A_{2}^{\lambda}\left(
\sigma,\omega\right)  \sqrt{\int_{3L}\left\vert T_{\sigma}^{\lambda}%
\frac{\mathbf{1}_{L}}{\sqrt{\left\vert L\right\vert _{\sigma}}}\right\vert
^{2}d\omega}\nonumber\\
&  \lesssim\mathfrak{H}_{T^{\lambda}}^{\mathcal{D},\operatorname*{glob}%
}\left(  \sigma,\omega\right)  ^{2}+A_{2}^{\lambda}\left(  \sigma
,\omega\right)  \sqrt{\int_{3L}\left\vert T_{\sigma}^{\lambda}\frac
{\mathbf{1}_{L}}{\sqrt{\left\vert L\right\vert _{\sigma}}}\right\vert
^{2}d\omega}\,.\nonumber
\end{align}
We conclude that%
\begin{align*}
\int_{3L}\left\vert T_{\sigma}^{\lambda}\frac{\mathbf{1}_{L}}{\sqrt{\left\vert
L\right\vert _{\sigma}}}\right\vert ^{2}d\omega & \leq C\mathfrak{H}%
_{T}^{\mathcal{D},\operatorname*{glob}}\left(  \sigma,\omega\right)
^{2}+CA_{2}^{\lambda}\left(  \sigma,\omega\right)  \sqrt{\int_{3L}\left\vert
T_{\sigma}^{\lambda}\frac{\mathbf{1}_{L}}{\sqrt{\left\vert L\right\vert
_{\sigma}}}\right\vert ^{2}d\omega}\\
& \leq C\mathfrak{H}_{T}^{\mathcal{D},\operatorname*{glob}}\left(
\sigma,\omega\right)  ^{2}+\frac{C}{\varepsilon}A_{2}^{\lambda}\left(
\sigma,\omega\right)  ^{2}+C\varepsilon\int_{3L}\left\vert T_{\sigma}%
^{\lambda}\frac{\mathbf{1}_{L}}{\sqrt{\left\vert L\right\vert _{\sigma}}%
}\right\vert ^{2}d\omega,
\end{align*}
and hence by absorbing the final term on the right into the left hand side
with $\varepsilon=\frac{1}{2C}$, and then taking the supremum over $L$, we
obtain
\[
\mathfrak{T}_{T^{\lambda}}^{\mathcal{D},\operatorname*{trip}}\left(
\sigma,\omega\right)  =\sup_{L\in\mathcal{D}}\sqrt{\int_{3L}\left\vert
T_{\sigma}^{\lambda}\frac{\mathbf{1}_{L}}{\sqrt{\left\vert L\right\vert
_{\sigma}}}\right\vert ^{2}d\omega}\lesssim\mathfrak{H}_{T}^{\mathcal{D}%
,\operatorname*{glob}}\left(  \sigma,\omega\right)  +A_{2}^{\lambda}\left(
\sigma,\omega\right)  .
\]
Combining this with \eqref{eq:A2_leq_Haar} gives%
\[
\mathfrak{T}_{T^{\lambda}}^{\mathcal{D},\operatorname*{trip}}\left(
\sigma,\omega\right)  \lesssim\mathfrak{H}_{T^{\lambda}}^{\mathcal{D}%
,\operatorname*{glob}}\left(  \sigma,\omega\right)  .
\]

\end{proof}

In \cite[Theorem 42]{AlLuSaUr} it was shown that for any single dyadic grid
$\mathcal{D}$ with doubling measures $\sigma$ and $\omega$ and an Stein elliptic operator $T^{\lambda}$, we can replace the testing characteristics $\mathfrak{T}%
_{T^{\lambda}}^{\operatorname{loc}}\left(  \sigma,\omega\right)  $ and
$\mathfrak{T}_{\left(  T^{\lambda}\right)  ^{\ast}}^{\operatorname{loc}%
}\left(  \omega,\sigma\right)  $ with dyadic testing
characteristics $\mathfrak{T}_{T^{\lambda}}^{\mathcal{D},\operatorname{loc}%
}\left(  \sigma,\omega\right)  $ and $\mathfrak{T}_{\left(  T^{\lambda
}\right)  ^{\ast}}^{\mathcal{D},\operatorname{loc}}\left(  \omega
,\sigma\right)  $, but the proof was quite complicated. Instead of appealing
to this result, we prove a weaker theorem that suffices for our purposes,
and with a much simpler proof, namely that we can replace the testing
characteristics with the larger \emph{triple} dyadic testing characteristics
$\mathfrak{T}_{T^{\lambda}}^{\mathcal{D},\operatorname*{trip}}\left(
\sigma,\omega\right)  $ and $\mathfrak{T}_{\left(  T^{\lambda}\right)  ^{\ast
}}^{\mathcal{D},\operatorname*{trip}}\left(  \omega,\sigma\right)  $.

\begin{proposition}
\label{single}Fix a dyadic grid $\mathcal{D}$ and assume notation as in Theorem \ref{T1_L2_Riesz}. If
the classical $T1$ theorem holds, i.e., \begin{equation}\label{eq:assume_T1}
\mathfrak{N}_{T^{\lambda}}\left(  \sigma,\omega\right)  \lesssim   A_{2}^{\lambda}\left(  \sigma,\omega\right)  +\mathfrak{T}%
_{T^{\lambda}}^{\operatorname*{loc}}\left(  \sigma,\omega\right)
+\mathfrak{T}_{\left(  T^{\lambda}\right)  ^{\ast}}^{\operatorname*{loc}}\left(  \omega,\sigma\right)  \,   ,
\end{equation}
then
\[
\mathfrak{N}_{T^{\lambda}}\left(  \sigma,\omega\right)  \lesssim   A_{2}^{\lambda}\left(  \sigma,\omega\right)  +\mathfrak{T}%
_{T^{\lambda}}^{\mathcal{D},\operatorname*{trip}}\left(  \sigma,\omega\right)
+\mathfrak{T}_{\left(  T^{\lambda}\right)  ^{\ast}}^{\mathcal{D}%
,\operatorname*{trip}}\left(  \omega,\sigma\right)  \,   .
\]

\end{proposition}

\begin{proof}
Given a doubling measure $\mu$ on $\mathbb{R}^{n}$ and $\varepsilon>0$, a
standard construction shows that there are positive constants $\eta, c_1 \in [0,1]$ and  $C_{0}, C_1 \in [1, \infty)$, depending only on the dimension $n$, the doubling constant of $\mu
$ and $\epsilon$, such that for any cube $I\in\mathcal{P}^{n}$, there are cubes $\left\{
J_{k}\right\}  _{k=1}^{C_{0}}\subset\mathcal{D}$ satisfying
\begin{equation}\label{eq:conds_Jk_cover}
\bigcup_{k=1}^{C_{0}}J_{k}\subset\eta I \, , \quad \left\vert I\setminus
\bigcup_{k=1}^{C_{0}}J_{k}\right\vert _{\mu}<\varepsilon\left\vert
I\right\vert _{\mu}\ \quad \text{and} \quad c_1 \ell \left ( I \right ) \leq \ell \left ( J_k \right ) \leq C_1 \ell \left ( I \right ) \, .
\end{equation}

In the computations that follow, we will track the dependence of certain constants on $\epsilon$ by a subscript: this dependence will be important to track since we will do an absorption argument at the end.

Thus from Minkowski's inequality, we have%
\[
\left(  \int_{I}\left\vert T_{\sigma}^{\lambda}\mathbf{1}_{I}\right\vert
^{2}d\omega\right)  ^{\frac{1}{2}}\leq\left(  \int_{I}\left\vert T_{\sigma
}^{\lambda}\mathbf{1}_{I\setminus\bigcup_{k=1}^{C_{0}}J_{k}}\right\vert
^{2}d\omega\right)  ^{\frac{1}{2}}+\sum_{k=1}^{C_{0}}\left(  \int
_{I}\left\vert T_{\sigma}^{\lambda}\mathbf{1}_{J_{k}}\right\vert ^{2}%
d\omega\right)  ^{\frac{1}{2}},
\]
where the square of the first term on the right hand side satisfies%
\begin{align*}
\int_{I}\left\vert T_{\sigma}^{\lambda}\mathbf{1}_{I\setminus\bigcup
_{k=1}^{C_{0}}J_{k}}\right\vert ^{2}d\omega &  \leq\mathfrak{N}_{T^{\lambda}%
}\left(  \sigma,\omega\right)  ^{2}\int_{I}\left\vert \mathbf{1}%
_{I\setminus\bigcup_{k=1}^{C_{0}}J_{k}}\right\vert ^{2}d\sigma\\
&  \leq\mathfrak{N}_{T^{\lambda}}\left(  \sigma,\omega\right)  ^{2}\left\vert
I\setminus\bigcup_{k=1}^{C_{0}}J_{k}\right\vert _{\sigma}\leq\varepsilon
\mathfrak{N}_{T^{\lambda}}\left(  \sigma,\omega\right)  ^{2}\left\vert
I\right\vert _{\sigma}\ .
\end{align*}
Now we turn to estimating the squares $\int_{I}\left\vert T_{\sigma}^{\lambda
}\mathbf{1}_{J_{k}}\right\vert ^{2}d\omega$ of the remaining terms for $1\leq
k\leq C_{0}$. We have%
\begin{align*}
\int_{I}\left\vert T_{\sigma}^{\lambda}\mathbf{1}_{J_{k}}\right\vert
^{2}d\omega &  =\int_{3J_{k}}\left\vert T_{\sigma}^{\lambda}\mathbf{1}_{J_{k}%
}\right\vert ^{2}d\omega+\int_{I\setminus3J_{k}}\left\vert T_{\sigma}%
^{\lambda}\mathbf{1}_{J_{k}}\right\vert ^{2}d\omega\\
&  \leq\mathfrak{T}_{T^{\lambda}}^{\mathcal{D},\operatorname*{trip}}\left(
\sigma,\omega\right)  ^{2}\left\vert J_{k}\right\vert _{\sigma}+C_{\epsilon} A_{2}%
^{\lambda}\left(  \sigma,\omega\right)  ^{2}\left\vert J_{k}\right\vert
_{\sigma},
\end{align*}
where the constant $C_{\epsilon}$ arises from the conditions \eqref{eq:conds_Jk_cover}. Summing in $k$ gives%
\[
\sum_{k=1}^{C_{0}}\left(  \int_{I}\left\vert T_{\sigma}^{\lambda}%
\mathbf{1}_{J_{k}}\right\vert ^{2}d\omega\right)  ^{\frac{1}{2}}\leq
\sqrt{C_{0}}\sqrt{\sum_{k=1}^{C_{0}}\int_{I}\left\vert T_{\sigma}^{\lambda
}\mathbf{1}_{J_{k}}\right\vert ^{2}d\omega}\leq C_{\epsilon} \left(  \mathfrak{T}%
_{T^{\lambda}}^{\mathcal{D},\operatorname*{trip}}\left(  \sigma,\omega\right)
+A_{2}^{\lambda}\left(  \sigma,\omega\right)  \right)  \sqrt{\left\vert
I\right\vert _{\sigma}} \, ,
\]
where we recall that $C_0$ depends on $\epsilon$ by \eqref{eq:conds_Jk_cover}.
Altogether then we have%
\[
\left(  \int_{I}\left\vert T_{\sigma}^{\lambda}\mathbf{1}_{I}\right\vert
^{2}d\omega\right)  ^{\frac{1}{2}}\leq C \left[  \sqrt{\varepsilon
}\mathfrak{N}_{T^{\lambda}}\left(  \sigma,\omega\right)  +C_{\epsilon}\mathfrak{T}%
_{T^{\lambda}}^{\mathcal{D},\operatorname*{trip}}\left(  \sigma,\omega\right)
+C_{\epsilon} A_{2}^{\lambda}\left(  \sigma,\omega\right)  \right]  \sqrt{\left\vert
I\right\vert _{\sigma}},
\]
which shows that%
\[
\mathfrak{T}_{T^{\lambda}}\left(  \sigma,\omega\right)  =\sup_{I\in
\mathcal{P}^{n}}\left(  \frac{1}{\left\vert I\right\vert _{\sigma}}\int
_{I}\left\vert T_{\sigma}^{\lambda}\mathbf{1}_{I}\right\vert ^{2}%
d\omega\right)  ^{\frac{1}{2}}\leq C \sqrt{\varepsilon}\mathfrak{N}%
_{T^{\lambda}}\left(  \sigma,\omega\right)  +C_{\epsilon}\mathfrak{T}_{T^{\lambda}%
}^{\mathcal{D},\operatorname*{trip}}\left(  \sigma,\omega\right)
+C_{\epsilon} A_{2}^{\lambda}\left(  \sigma,\omega\right)  .
\]
Now we conclude from this and (\ref{eq:assume_T1}) that
\begin{align*}
\mathfrak{N}_{T^{\lambda}}\left(  \sigma,\omega\right)   &  \leq C_{\lambda
,n}\left(  A_{2}^{\lambda}\left(  \sigma,\omega\right)  +\mathfrak{T}%
_{T^{\lambda}}\left(  \sigma,\omega\right)  +\mathfrak{T}_{\left(  T^{\lambda
}\right)  ^{\ast}}\left(  \omega,\sigma\right)  \right) \\
&  \leq C_{\lambda,n}\left(  C \sqrt{\varepsilon}\mathfrak{N}%
_{T^{\lambda}}\left(  \sigma,\omega\right)  +C_{\epsilon} A_{2}^{\lambda}\left(
\sigma,\omega\right)  +C_{\epsilon}\mathfrak{T}_{T^{\lambda}}^{\mathcal{D}%
,\operatorname*{trip}}\left(  \sigma,\omega\right)  +C_{\epsilon}\mathfrak{T}_{\left(
T^{\lambda}\right)  ^{\ast}}^{\mathcal{D},\operatorname*{trip}}\left(
\omega,\sigma\right)  \right)  .
\end{align*}
Since $T^{\lambda}$ is an admissible truncation, we have the \`a-priori estimate $\mathfrak{N}%
_{T^{\lambda}}\left(  \sigma,\omega\right)  <\infty$, and then absorbing the
term $C C_{\lambda,n} \sqrt{\varepsilon}\mathfrak{N}_{T^{\lambda}}\left(
\sigma,\omega\right)  $ into the left hand side with $\varepsilon \equiv \left(
\frac{1}{2 C C_{\lambda,n}}\right)  ^{2}$ finishes the proof of
Proposition \ref{single}.
\end{proof}

\begin{proof}
[Proof of Theorem \ref{main''}] We trivially have 
	\[
		\mathfrak{H}
_{ \mathbf{T}^{\lambda}}^{\mathcal{D},\operatorname*{glob}}\left(  \sigma,\omega\right)
 + \mathfrak{H}_{ \mathbf{T}^{\lambda}}^{\mathcal{D},\operatorname*{glob}}\left(
\omega,\sigma\right)  \lesssim \mathfrak{N}_{ \mathbf{T}^{\lambda}}\left(  \sigma
,\omega\right) \, .
	\]
Our assumption and Proposition \ref{single} followed by Theorem \ref{Haar} altogether give the opposite inequality,%
\begin{align*}
\mathfrak{N}_{\mathbf{T}^{\lambda}}\left(  \sigma,\omega\right)   &  \lesssim
A_{2}^{\lambda}\left(  \sigma,\omega\right)  +\mathfrak{T}_{ \mathbf{T}^{\lambda}
}^{\mathcal{D},\operatorname*{trip}}\left(  \sigma,\omega\right)
+\mathfrak{T}_{ (\mathbf{T}^{\lambda})^*}^{\mathcal{D}%
,\operatorname*{trip}}\left(  \omega,\sigma\right) \lesssim\mathfrak{H}_{ \mathbf{T}^{\lambda}}^{\mathcal{D},\operatorname*{glob}%
}\left(  \sigma,\omega\right)  +\mathfrak{H}_{ (\mathbf{T}^{\lambda})^*}^{\mathcal{D}%
,\operatorname*{glob}}\left(  \omega,\sigma\right)  .
\end{align*}

\end{proof}

\section{Alpert wavelets}

\label{section:Alpert}

In this section we consider a much wider
class of bases than the class of weighted Haar wavelets, namely the
weighted \emph{Alpert} wavelets. There is however a price to pay, namely that
the family of truncations of Calder\'{o}n-Zygmund operators must be further
restricted.

We first recall the construction of weighted Alpert wavelets in
\cite{RaSaWi}\footnote{See \cite{AlSaUr} for the correction of a small
oversight in \cite{RaSaWi}.}. The Alpert wavelets generalize the Haar wavelets
by permitting more vanishing moments, at the price of including polynomials in
the restrictions of the wavelets to children of cubes. Let $\mu$ be a locally
finite positive Borel measure on $\mathbb{R}^{n}$, and fix $\kappa
\in\mathbb{N}$. For each cube $Q$, denote by $L_{Q;\kappa}^{2}\left(
\mu\right)  $ the finite dimensional subspace of $L^{2}\left(  \mu\right)  $
that consists of linear combinations of the indicators of\ the dyadic children
$\mathfrak{C}\left(  Q\right)  $ of $Q$ multiplied by polynomials of degree
less than $\kappa$, and such that the linear combinations have vanishing $\mu
$-moments on the cube $Q$ up to order $\kappa-1$:%
\[
L_{Q;\kappa}^{2}\left(  \mu\right)  \equiv\left\{  f=%
%TCIMACRO{\dsum \limits_{Q^{\prime}\in\mathfrak{C}\left(  Q\right)  }}%
%BeginExpansion
{\displaystyle\sum\limits_{Q^{\prime}\in\mathfrak{C}\left(  Q\right)  }}
%EndExpansion
\mathbf{1}_{Q^{\prime}}p_{Q^{\prime};\kappa}\left(  x\right)  :\int
_{Q}f\left(  x\right)  x^{\beta}d\mu\left(  x\right)  =0,\ \ \ \text{for
}0\leq\left\vert \beta\right\vert <\kappa\right\}  ,
\]
where $p_{Q^{\prime};\kappa}\left(  x\right)  =\sum_{\beta\in\mathbb{Z}%
_{+}^{n}:\left\vert \beta\right\vert \leq\kappa-1\ }a_{Q^{\prime};\beta
}x^{\beta}$ is a polynomial in $\mathbb{R}^{n}$ of degree less than $\kappa$.
Here $x^{\beta}=x_{1}^{\beta_{1}}x_{2}^{\beta_{2}}...x_{n}^{\beta_{n}}$. Let
$d_{Q;\kappa}\equiv\dim L_{Q;\kappa}^{2}\left(  \mu\right)  $ be the dimension
of the finite dimensional linear space $L_{Q;\kappa}^{2}\left(  \mu\right)  $.

For $Q$, let
$\bigtriangleup_{Q;\kappa}^{\mu}$ denote orthogonal projection onto the finite
dimensional subspace $L_{Q;\kappa}^{2}\left(  \mu\right)  $, and let
$\mathbb{E}_{Q;\kappa}^{\mu}$ denote orthogonal projection onto the finite
dimensional subspace%
\[
\mathcal{P}_{Q;\kappa}^{n}\left(  \mu\right)  \equiv
\mathrm{\operatorname*{Span}}\{\mathbf{1}_{Q}x^{\beta}:0\leq\left\vert
\beta\right\vert <\kappa\}.
\]

For a doubling measure $\mu$, it is proved in \cite{RaSaWi} that for any dyadic grid $\mathcal{D}$, we have the
orthonormal decompositions%
\begin{equation}
f=\sum_{Q\in\mathcal{D}}\bigtriangleup_{Q;\kappa}^{\mu}f,\ \ \ \ \ f\in
L_{\mathbb{R}^{n}}^{2}\left(  \mu\right)  ,\ \ \ \ \ \text{where }\left\langle
\bigtriangleup_{P;\kappa}^{\mu}f,\bigtriangleup_{Q;\kappa}^{\mu}f\right\rangle
=0\text{ for }P\neq Q, \label{Alpert expan}%
\end{equation}
where convergence holds both in $L_{\mathbb{R}^{n}}^{2}\left(  \mu\right)  $
norm and pointwise $\mu$-almost everywhere, the telescoping identities%
\begin{equation}
\mathbf{1}_{Q}\sum_{I:\ Q\subsetneqq I\subset P}\bigtriangleup_{I;\kappa}%
^{\mu}=\mathbb{E}_{Q;\kappa}^{\mu}-\mathbf{1}_{Q}\mathbb{E}_{P;\kappa}^{\mu
}\ \text{ \ for }P,Q\in\mathcal{D}\text{ with }Q\subsetneqq P,
\label{telescoping}%
\end{equation}
and the moment vanishing conditions%
\begin{equation}
\int_{\mathbb{R}^{n}}\bigtriangleup_{Q;\kappa}^{\mu}f\left(  x\right)
\ x^{\beta}d\mu\left(  x\right)  =0,\ \ \ \text{for }Q\in\mathcal{D},\text{
}\beta\in\mathbb{Z}_{+}^{n},\ 0\leq\left\vert \beta\right\vert <\kappa\ .
\label{mom con}%
\end{equation}

We have the following bound for the Alpert projections $\mathbb{E}_{I;\kappa
}^{\mu}$ (\cite[see (4.7) on page 14]{Saw6}):
\begin{equation}
\left\Vert \mathbb{E}_{I;\kappa}^{\mu}f\right\Vert _{L_{I}^{\infty}\left(
\mu\right)  }\lesssim E_{I}^{\mu}\left\vert f\right\vert \leq\sqrt{\frac
{1}{\left\vert I\right\vert _{\mu}}\int_{I}\left\vert f\right\vert ^{2}d\mu
},\ \ \ \ \ \text{for all }f\in L_{\operatorname*{loc}}^{2}\left(  \mu\right)
. \label{analogue}%
\end{equation}
In terms of Alpert coefficient vectors $\widehat{f}\left(  I\right)
\equiv\left\{  \left\langle f,h_{I;\kappa}^{\mu,\gamma}\right\rangle \right\}
_{\gamma\in\Gamma_{I,n,\kappa}^{\mu}}$ for a choice of an orthonormal basis
$\left\{  h_{I;\kappa}^{\mu,\gamma}\right\}  _{\gamma\in\Gamma_{I,n,\kappa
}^{\mu}}$ of $L_{I;\kappa}^{2}\left(  \mu\right)  $, where $\Gamma
_{I,n,\kappa}$ is a convenient finite index set of size $d_{Q;\kappa}$, we
thus have%
\begin{equation}
\left\vert \widehat{f}\left(  I\right)  \right\vert =\left\Vert \bigtriangleup
_{I;\kappa}^{\sigma}f\right\Vert _{L^{2}\left(  \sigma\right)  }\leq\left\Vert
\bigtriangleup_{I;\kappa}^{\sigma}f\right\Vert _{L^{\infty}\left(
\sigma\right)  }\sqrt{\left\vert I\right\vert _{\sigma}}\leq C\left\Vert
\bigtriangleup_{I;\kappa}^{\sigma}f\right\Vert _{L^{2}\left(  \sigma\right)
}=C\left\vert \widehat{f}\left(  I\right)  \right\vert . \label{analogue'}%
\end{equation}
For notational convenience, we sometimes denote a choice $\left\{
h_{I;\kappa}^{\mu,\gamma}\right\}  _{\gamma\in\Gamma_{I,n,\kappa}^{\mu}}$ of
$\kappa$-Alpert wavelets by $\mathbf{a}_{\kappa}^{\mu}\equiv\left\{
h_{I;\kappa}^{\mu,\gamma}\right\}  _{\gamma\in\Gamma_{I,n,\kappa}^{\mu}}$. As
for the Haar wavelets, whenever we write $h_{I; \kappa}^{\mu}$, we will mean
an $L^{2} \left( \mu\right) $ normalized function in $L^{2} _{I; \kappa}
\left(  \mu\right) $. So as before, if we do not need to be precise about
which specific Alpert wavelet we're referring to, we'll simply write $h_{I;
\kappa}^{\mu}$ for an individual Alpert wavelet, or $\left\{  h_{I; \kappa
}^{\mu} \right\} $ for the collection of $\mu$-weighted Alpert wavelets
on an interval $I$.

Let $\kappa\in\mathbb{N}$, let $\mathcal{D}$ be a dyadic grid
and define the global and local dyadic  $\kappa$\emph{-Alpert} testing characteristics
for $T^{\lambda}$ by%
\begin{align*}
\mathfrak{A}_{T^{\lambda};\kappa}^{\mathcal{D}, \operatorname*{glob}}\left(  \sigma
,\omega\right)   &  \equiv\sup_{I\in\mathcal{D}} \sup\limits_{\left\{
h_{I; \kappa}^{\sigma} \right\}  }\left\Vert T_{\sigma}^{\lambda}h_{I;\kappa
}^{\sigma,\gamma}\right\Vert _{L^{2}\left(  \omega\right)  }\text{ and
}\mathfrak{A}_{T^{\lambda};\kappa}^{\mathcal{D}, \operatorname*{glob}}\left(  \omega
,\sigma\right)  \equiv\sup_{I\in\mathcal{D} } \sup\limits_{\left\{  h_{I;
\kappa}^{\omega} \right\}  } \left\Vert T_{\omega}^{\lambda,\ast}h_{I;\kappa
}^{\omega}\right\Vert _{L^{2}\left(  \sigma\right)  },\\
\mathfrak{A}_{T^{\lambda};\kappa}^{\mathcal{D},\operatorname{loc}}\left(  \sigma
,\omega\right)   &  \equiv\sup_{I\in\mathcal{D}} \sup\limits_{\left\{
h_{I; \kappa}^{\sigma} \right\}  } \left\Vert \mathbf{1}_{I}T_{\sigma
}^{\lambda}h_{I;\kappa}^{\sigma}\right\Vert _{L^{2}\left(  \omega\right)
}\text{ and }\mathfrak{A}_{T^{\lambda};\kappa}^{\mathcal{D}, \operatorname{loc}}\left(
\omega,\sigma\right)  \equiv\sup_{I\in\mathcal{D}} \sup\limits_{\left\{
h_{I; \kappa}^{\omega} \right\}  } \left\Vert \mathbf{1}_{I}T_{\omega
}^{\lambda}h_{I;\kappa}^{\omega}\right\Vert _{L^{2}\left(  \sigma\right)  } \, .
\end{align*}
Note that we have replaced $\mathfrak{H}$ = fraktur $H$ by
$\mathfrak{A}$ = fraktur $A$ and added the subscript $;\kappa$ when passing
from Haar to Alpert characteristics; indeed, when $\kappa=1$, we recover the familiar Haar testing characteristics. See \cite{RaSaWi,SaWi} for additional details on weighted Alpert wavelets.

Here we extend Theorem \ref{main'} to Alpert
orthonormal bases. Because of the additional parameter $\kappa$, then the constants implicit in the symbols $\lesssim, \approx$ and $\gtrsim$ may depend on $\kappa$.
\begin{theorem}
\label{main' Alpert}Let $\lambda \in [0, n) \setminus \{1\}$. If $\kappa\in\mathbb{N}$, $\sigma$ and $\omega$ are
doubling measures on $\mathbb{R}$ and $\mathcal{D}$ is a dyadic grid on
$\mathbb{R}$, then%
\[
\mathfrak{N}_{\mathbf{R}^{\lambda,n}}\left(  \sigma,\omega\right)  \approx
\mathfrak{A}_{\mathbf{R}^{\lambda,n};\kappa}^{\mathcal{D},\operatorname*{glob}}\left(
\sigma,\omega\right)  +\mathfrak{A}_{\mathbf{R}^{\lambda,n};\kappa}^{\mathcal{D}%
,\operatorname*{glob}}\left(  \omega,\sigma\right)  \ .
\]

\end{theorem}
As before, using the already existing $T1$ theorems for the vector Riesz transforms, these theorems boil down to proving the following theorem.

\begin{theorem}
 \label{thm:T1_to_Ta_all}  
 Let $\kappa\in\mathbb{N}$, $1<p<\infty$, $\lambda \in [0, n)$, 
$\sigma$ and $\omega$ be doubling measures on $\mathbb{R}$, $\mathcal{D}$ be
a dyadic grid on $\mathbb{R}$ and $T^{\lambda}$ be a $\lambda$-fractional Calder\' on-Zygmund operator. If
\begin{equation}
    \label{eq:Lp_testing_quad_2}
\mathfrak{N}_{ T^{\lambda}}\left(  \sigma,\omega\right)  \approx
\mathfrak{T}_{T^{\lambda}}^{\operatorname*{loc}}\left(
\sigma,\omega\right)  +\mathfrak{T}_{T^{\lambda}}%
^{\operatorname*{loc}}\left(  \omega,\sigma\right)
+A_{2}^{\lambda}\left(  \sigma,\omega\right)  \, ,
\end{equation}
then
\[
\mathfrak{N}_{T^{\lambda}}\left(  \sigma,\omega\right)  \approx
\mathfrak{A}_{T^{\lambda};\kappa}^{\mathcal{D},\operatorname*{glob}}\left(
\sigma,\omega\right)  +\mathfrak{A}_{T^{\lambda};\kappa}^{\mathcal{D}%
,\operatorname*{glob}}\left(  \omega,\sigma\right)  \ .
\]
\end{theorem}

The proof of Theorem \ref{thm:T1_to_Ta_all} is virtually identical to the proof of Theorem \ref{main''}. However there are two important differences. Recall that in the proof of
Theorem \ref{main''}, given dyadic cubes $I$ and $J$ of equal side length $\ell\left(  I\right)  $
with $\operatorname*{dist}\left(  I,J\right)  \approx\frac{1}{\delta}%
\ell\left(  I\right)  $, and centers $c_{I}$ and $c_{J}$, such that $J\subset
c_{I}+S\left(  \mathbf{v},\delta\right)  $, we showed there exist two
$m$-grandchildren $K,L\in\mathfrak{C}_{\mathcal{D}}^{\left(  m\right)
}\left(  I\right)  $, where $m \lesssim 1$, such that $\operatorname*{dist}\left(  K,L\right)
\approx
\ell\left(  I\right)  $ and $L\subset c_{K}+S\left(  \mathbf{v},\delta\right)
$. Then the function $\varphi$ defined in (\ref{def phi}) by%
\begin{equation}
\varphi\equiv\frac{1}{\left\vert L\right\vert _{\sigma}}\mathbf{1}_{L}%
-\frac{1}{\left\vert K\right\vert _{\sigma}}\mathbf{1}_{K}\ , \label{def phi'}%
\end{equation}
was shown to satisfy two critical properties.

\begin{enumerate}

\item The nondegeneracy inequality
(\ref{eq:init_nondegen}), i.e.,  $T_{\sigma}^{\lambda}\varphi\left(  x\right)$ is of one sign for $x \in J$ and 
\begin{align}\label{eq:nondegen_original}
\left\vert T_{\sigma}^{\lambda}\varphi\left(  x\right)  \right\vert
\gtrsim\frac{1}{\left|  I\right|  ^{1-\frac{\lambda}{n}} } \quad\text{
when } x \in J,
\end{align}
which followed because the integrand%
\begin{equation}
\left(  K^{\lambda}\left(  x,y\right)  -K^{\lambda}\left(  x,c\right)
\right)  \varphi\left(  y\right)  \label{T phi'}%
\end{equation}
in (\ref{T phi pointwise}) doesn't change sign if $K^{\lambda}$ is gradient elliptic, which here on we refer to as a ``positivity condition.''

\item That $\varphi$ is a linear combination of Haar wavelets associated to cubes for which $I$ is an ancestor at most $m$ generations above, i.e., the key identity (\ref{key identity}),
\begin{equation}
\frac{\varphi}{\left\Vert \varphi\right\Vert _{L^{2}\left(  \sigma\right)  }%
}=\sum_{M\in\mathfrak{C}_{\mathcal{D}}^{\left[  m-1\right]  }\left(  I\right)
,\gamma\in\Gamma_{M,n}^{\sigma}}\left\langle \frac{\varphi}{\left\Vert
\varphi\right\Vert _{L^{2}\left(  \sigma\right)  }},h_{M}^{\sigma,\gamma
}\right\rangle h_{M}^{\sigma,\gamma} \, ,
\end{equation}
	which followed because
\begin{enumerate}
\item $\frac{\varphi}{\left\Vert \varphi\right\Vert _{L^{2}\left(
\sigma\right)  }}$ is supported in $I$ with $\sigma$-mean zero, and is
constant on the $m$-grandchildren of $I$ for some $m \lesssim1$, and

\item the vector space of such functions is the linear span of the Haar
wavelets $\left\{  h_{M}^{\sigma,\tau}\right\}  _{M\in\mathfrak{C}%
_{\mathcal{D}}^{\left[  m-1\right]  }\left(  I\right)  }$.
\end{enumerate}
\end{enumerate}

In dealing with $\kappa$-Alpert wavelets instead of Haar wavelets, the
function $\varphi$ must be redefined so that

(\textbf{I}) the nondegeneracy condition (\ref{eq:init_nondegen}) continues to hold

(\textbf{II}) and $\varphi$ belongs to the linear span of the $\kappa$-Alpert
wavelets $\left\{  h_{K;\kappa}^{\sigma,\gamma}\right\}  _{K\in\mathfrak{C}%
_{\mathcal{D}}^{\left[  m-1\right]  }\left(  I \right)  ,\gamma
\in\Gamma\left(  K\right)  }$.

We focus on showing $\varphi$ is a dyadic step function on $I$ with bounded depth $m$ that satisfies the moment-vanishing conditions. Because $\varphi$ is a dyadic step function, higher order polynomials are not needed. To replace the positivity condition (\ref{T phi'}), we assume in addition that the kernel $K^{\lambda}\left(  x,y\right)  $ is $\kappa
$-elliptic, and use linear algebra to get the appropriate moment estimates.

\subsection{The dimension 1 argument for Theorem  \ref{thm:T1_to_Ta_all}}

We first work in the simpler case when the dimension $n=1$. Fix a configuration of intervals $I$ and $J$ in $\mathcal{D}$ as above, i.e., as in the proof of Theorem \ref{main'}, and by translating $\mathcal{D}$, assume without loss of
generality,
\[
I=\left[  0,\ell\left(  I\right)  \right]  \text{ lies to the left of }J.
\]
Let $m$ be a large positive integer that will be determined later; $m$ may only depend on $\kappa$, the doubling constants of $\sigma$ and $\omega$, and the Calder\'on-Zygmund data for $K$. To make the computations transparent, in what follows, the implicit constant $\approx, \lesssim$ and $\gtrsim$ will not be allowed to depend on $m$ \emph{until we fix  $m$.} For $h = 2^{-m} \ell \left (I \right )$, define 
\[
	K_j = \left [ \left (1+4 (j-1) \right )h, \left( 2+ 4(j-1) \right ) h \right ] \text{ for } 1 \leq j \leq \kappa +1 \, .
\]
Note $K_1 , \ldots, K_{\kappa+1}$ are dyadic subintervals of $I$, each of length $h$ and separated by $h$ as well.  Define 
\begin{equation}\label{eq:def_phi_1D}
	\varphi \equiv \frac{1}{ \left | I \right |_{\sigma}} \sum\limits_{i=1}^{\kappa+1} u_{i} \mathbf{1}_{K_i} \, ,
\end{equation}
where we will want to choose
\[
	\mathbf{u} \equiv \begin{bmatrix} u_1  \\ \vdots \\ u_{\kappa+1} \end{bmatrix} 
\] so that $\varphi$ satisfies the moment vanishing conditions up to order $\kappa-1$, i.e.,
\begin{equation}\label{eq:moment_vanishing_1d} 
	\int y^{\ell} \varphi (y) d \sigma (y)= 0  \text{ for } 0 \leq \ell \leq \kappa-1 \, . 
\end{equation}
The moment vanishing condition is equivalent to 
\[
	M \mathbf{u} \in \operatorname{Span} \left \{ \mathbf{e}_{\kappa+1} \right \} \, ,
\]
where we define the moment matrix
\begin{align*}
	M &  =\left[
\begin{array}
[c]{cccc}%
\int_{K_{1}}d\sigma\left(
y\right)   & \int_{K_{2}%
}d\sigma\left(  y\right)   & \cdots & \int_{K_{\kappa+1}}d\sigma\left(  y\right)  \\
\int_{K_{1}}yd\sigma\left(
y\right)   & \int_{K_{2}%
}yd\sigma\left(  y\right)   & \cdots & \int_{K_{\kappa+1}}yd\sigma\left(  y\right)  \\
\vdots & \vdots & \ddots & \vdots\\
\int_{K_{1}}y^{\kappa}%
d\sigma\left(  y\right)   & \int_{K_{2}}y^{\kappa}d\sigma\left(  y\right)   & \cdots & \int_{K_{\kappa+1}}%
y^{\kappa}d\sigma\left(  y\right)
\end{array}
\right] \, . 
\end{align*}
So let $\mathbf{u}$ be a unit vector for which 
\begin{align}\label{eq:Alpert_last_comp}
	M \mathbf{u} = \gamma \mathbf{e}_{\kappa+1} \text{ where } \gamma \in \mathbb{R} \, ,
\end{align}
whose existence is justified as follows: \eqref{eq:Alpert_last_comp}  is equivalent to 
\[
	\widetilde{M} \mathbf{u} = 0 \, ,
\]
where $\widetilde{M}$ is obtained by replacing the last row of $M$ by a row of zeros. Since $\widetilde{M}$ maps $\mathbb{R}^{\kappa+1}$ to a $\kappa$-dimensional space, it has nontrivial kernel which must contain a unit vector $\mathbf{u}$. Then with this choice of $\mathbf{u}$, w have $\varphi$ satisfies the moment vanishing conditions up to order $\kappa-1$, and $\varphi$ has $\kappa$-th moment equal to $\frac{\gamma}{\left | I \right |_{\sigma}}$, i.e.,
\begin{align}\label{eq:phi_nondegen_moment}
	\left | \int y^{\kappa} \varphi(y) d \sigma (y) \right | = \frac{\left | \gamma \right |}{\left | I \right |_{\sigma}} \, . 
\end{align}
Let us estimate $\left | \gamma \right |$ from below: by \eqref{eq:Alpert_last_comp} and Cramer's rule we have
\[
	 u_i = \frac{\det M^i}{\det M} \gamma  \, ,
\]
where $M^i$ is the matrix $M$ with $i$th column replaced by the column vector $\mathbf{e}_{\kappa+1}$. Thus for all $i$ we have
\[
	\left | \gamma \right | = |u_i| \frac{ | \det M |}{ | \det M^i |  } \, .
\]

The moment matrix $M$ has determinant%
\begin{align*}
	&  \det M =\sum_{\pi \in\operatorname*{Aut}\left(  \left\{
0,1,....,\kappa\right\}  \right)  }\left(  \operatorname{sgn}\pi\right)
\left(  \int_{K_{1}}y_{1}^{\pi\left(  0\right)  }d\sigma\left(
y_{1}\right)  \right)  \left(  \int_{K_{2}}y_{2}^{\pi\left(  1\right)
}d\sigma\left(  y_{2}\right)  \right)  ...\left(  \int_{K_{\kappa+1}}%
y_{\kappa+1}^{\pi\left(  \kappa\right)  }d\sigma\left(  y_{\kappa
+1}\right)  \right)  \\
&  =  \int_{K_{1}}\int_{K_{2}}...\int_{K_{\kappa+1}}\left\{
\sum_{\pi\in\operatorname*{Aut}\left(  \left\{  0,1....,\kappa\right\}
\right)  }\left(  \operatorname{sgn}\pi\right)  y_{1}^{\pi\left(
1\right)  }y_{2}^{\pi\left(  2\right)  }...y_{\kappa+1}^{\pi\left(
\kappa+1\right)  }\right\}  d\sigma\left(  y_{1}\right)  d\sigma\left(
y_{2}\right)  ...d\sigma\left(  y_{\kappa+1}\right)  \\
&  = \int_{K_{1}}\int_{K_{2}}...\int_{K_{\kappa+1}}\det\left[
\begin{array}
[c]{cccc}%
1 & 1 & \cdots & 1\\
y_{1} & y_{2} & \cdots & y_{\kappa+1}\\
\vdots & \vdots & \ddots & \vdots\\
y_{1}^{\kappa} & y_{2}^{\kappa} & \cdots & y_{\kappa+1}^{\kappa}%
\end{array}
\right]  d\sigma\left(  y_{1}\right)  d\sigma\left(  y_{2}\right)
...d\sigma\left(  y_{\kappa+1}\right)  ,
\end{align*}
which by Vandermonde's determinant theorem, equals 
\begin{align*}
&  =  \int_{K_{1}}\int_{K_{2}}...\int_{K_{\kappa+1}%
}\left\{  \prod_{1\leq i<j\leq\kappa+1}\left(  y_{j}-y_{i}\right)  \right\}
d\sigma\left(  y_{1}\right)  d\sigma\left(  y_{2}\right)  ...d\sigma\left(
y_{\kappa+1}\right)  \\
&  \approx   \int_{K_{1}}\int_{K_{2}}...\int_{K_{\kappa+1}%
}\left\{  \prod_{1\leq i<j\leq\kappa+1} h  \right\}
d\sigma\left(  y_{1}\right)  d\sigma\left(  y_{2}\right)  ...d\sigma\left(
	y_{\kappa+1}\right) = h^{\frac{\kappa\left(  \kappa+1\right)}{2}} \left \{ \prod\limits_{j=1}^{\kappa+1} \left | K_j \right |_{\sigma} \right \}   \, ,
\end{align*}
since each $h \leq y_{j}-y_{i}\leq 10 \kappa  h$ for $1\leq i<j\leq\kappa+1$.

Because $\det M^i$ and the determinant of the matrix $M$ with $i$th column and $(\kappa+1)$th row removed are equals in absolute value, then similarly
\[
	\left | \det M^i \right | \approx h^{\frac{\left ( \kappa -1\right ) \kappa }{2}} \left \{ \prod\limits_{j=1 \, , j \neq i}^{\kappa+1} \left | K_j \right |_{\sigma} \right \} \, .
\]

Thus, 
\begin{align}\label{eq:gamma_estimate}
	| \gamma | \approx \left | u_i \right | h^{\kappa} \left | K_i \right |_{\sigma} \text{ for all } 1 \leq i \leq \kappa+1 \, .  
\end{align}
Let $i$ be an index so that $\sum\limits_{j=1}^{\kappa+1} \left | u_j \right | \left | K_j \right |_{\sigma} \approx \left | u_i \right | \left | K_i \right |_{\sigma}$.
Now using Taylor's theorem and then the moment vanishing condition, for $x \in J$ we compute
\begin{align}
	\int K^{\lambda} (x,y) \varphi(y) d \sigma(y) &= \int \left \{ K^{\lambda} (x,0)+ \ldots + \partial_{2} ^{\kappa} K^{\lambda} (x,0)  \frac{y^{\kappa}}{\kappa!} + \partial_{2} ^{\kappa+1} K^{\lambda} (x,\theta y)  \frac{y^{\kappa+1}}{(\kappa+1)!} \right \}  \varphi(y) d \sigma(y) \label{eq:Taylor_Alpert_1d}\\
	&= \partial_{2} ^{\kappa} K^{\lambda} (x,0) \int     \frac{y^{\kappa}}{\kappa!} \varphi(y) d \sigma(y) + \int \partial_{2} ^{\kappa+1} K^{\lambda} (x,\theta y)  \frac{y^{\kappa+1}}{(\kappa+1)!} \varphi(y) d \sigma(y) \nonumber \, ,
\end{align}
where $\theta y$ denotes some point in $\left ( 0,y \right )$.
By \eqref{eq:phi_nondegen_moment}, \eqref{eq:gamma_estimate}, $\kappa$-ellipticity and the fact that $x \in J$ implies $|x-0| \approx \ell \left (I \right )$, we have  
\[
	\left |\partial_{2} ^{\kappa} K^{\lambda} (x,0) \int     \frac{y^{\kappa}}{\kappa!} \varphi(y) d \sigma(y) \right |\gtrsim \left ( \frac{h}{\ell \left(I \right )} \right)^{\kappa} \frac{ |u_i| \left | K_i \right |_{\sigma}}{ \ell \left ( I \right )^{1- \lambda} \left | I \right |_{\sigma} } \, .
\]And by the Calder\'on-Zygmund estimates, we have 
\begin{align*}
	&\left | \int \partial_{2} ^{\kappa+1} K^{\lambda} (x,\theta y)  \frac{y^{\kappa+1}}{(\kappa+1)!} \varphi(y) d \sigma(y) \right | \lesssim \left \| K^{\lambda} (x,\theta y)  y^{\kappa+1} \right \|_{L^{\infty} (\bigcup\limits_j K_j )} \left \| \varphi \right \|_{L^1 (\sigma)}  \\
 \lesssim &\left ( \frac{h}{\ell \left(I \right )} \right)^{\kappa+1} \frac{\sum\limits_{j=1}^{\kappa +1} \left | u_j \right | \left | K_j \right |_{\sigma}}{ \left | I \right |_{\sigma} \ell \left ( I \right )^{1-\lambda}} \approx \left ( \frac{h}{\ell \left(I \right )} \right)^{\kappa+1} \frac{\left | u_i \right | \left | K_i \right |_{\sigma}}{ \ell \left ( I \right )^{1- \lambda} \left | I \right |_{\sigma} } \, .
\end{align*}

Since $h = 2^{-m} \ell \left (I \right)$, then \emph{fixing} $m$ to be some sufficiently large constant depending only on $\kappa$, the doubling constants of $\sigma$ and $\omega$, and the Calder\'on-Zygmund data of $T$, yields the first term on the right of \eqref{eq:Taylor_Alpert_1d} dominates, $\int K^{\lambda} (x,y) \varphi(y) d \sigma(y)$ is of one sign, and 
\[
	\left | \int K^{\lambda} (x,y) \varphi(y) d \sigma(y) \right | \approx \left ( \frac{h}{\ell \left(I \right )} \right)^{\kappa} \frac{ |u_i| \left | K_i \right |_{\sigma}}{ \ell \left ( I \right )^{1- \lambda} \left | I \right |_{\sigma} } \approx \left ( \frac{h}{\ell \left(I \right )} \right)^{\kappa} \frac{1}{\ell \left ( I \right )^{1- \lambda}}  \sum\limits_j |u_j| \frac{\left | K_j \right |_{\sigma}}{ \left | I \right |_{\sigma}} \, .
\]
Since $m$ is fixed, any constant depending on $m$ may be absorbed into the symbols $\lesssim, \approx $ and $\gtrsim$. Since $h = 2^{-m} \ell \left (I \right)$, then 
\[
	\left | \int K ^{\lambda} (x,y) \varphi(y) d \sigma(y) \right | \approx \frac{1}{\ell \left (I \right )^{1- \lambda}}   \sum\limits_j |u_j|  \frac{\left | K_j \right |_{\sigma} }{ \left | I \right |_{\sigma} } \, .
\]
By the doubling of $\sigma$, we have $\left | K_j \right |_{\sigma} \approx \left | I \right |_{\sigma}$, which when combined with the fact that $\sum\limits_j |u_j| \approx 1$, yields the nondegeneracy condition \eqref{eq:nondegen_original}.

Finally, because $m \lesssim 1$, then $\varphi$ is a linear combination of indicators of dyadic descendants of $I$, all at most a bounded number of generations below $I$, and $\varphi$ satisfies the moment vanishing conditions. Hence $\varphi$ is a linear combination of a bounded number of $\kappa$-Alpert wavelets.

Following the proof of Theorem \ref{main'}, the proof of Theorem \ref{main' Alpert} is nearly complete in the case $n=1$: the rest of the proof of Theorem \ref{main'} remains nearly identical, except that we must find an appropriate substitute for  \eqref{eq:init_interval_to_Haar_bd}. To do so, fix some arbitrary dyadic interval and label it $K_i$, and then construct dyadic intervals $I$ and $\{K_j\}_{j \neq i}$ so that the same picture as before holds. More precisely, choose $I$ and $\{K_j\}_{j \neq i}$ so that all the intervals $K_j$ are all dyadic subintervals of $I$ of sidelength $2^{-m} \ell \left (I \right )$ and $\{3 K_j\}$ are all separated by $\approx \ell \left ( I \right )$; note that we do not require that $K_j$'s are all close to the left side of $I$. And then define $\varphi$ as in \eqref{eq:def_phi_1D}. Again, we choose the unit vector $\mathbf{u}$ so that $\varphi$ satisfies the moment-vanishing condition \eqref{eq:moment_vanishing_1d}. We can then apply Cramer's rule all over again, and using \eqref{eq:gamma_estimate}, we have for all $1 \leq j\leq \kappa +1$ that
\[
 |u_j| \left |I \right |_{\sigma} h^{\kappa}\approx  |u_j| \left |K_j \right |_{\sigma} h^{\kappa} \approx \left | \gamma \right | \approx |u_{\ell}| \left |K_{\ell} \right |_{\sigma} h^{\kappa} \approx \left |I \right |_{\sigma} h^{\kappa}  \, ,
\]
where we choose $\ell$ so that $\left | u_{\ell} \right | \approx 1$. Hence for all $j$ we then have
\[
|u_j| \approx 1 \, .
\] Then we can replace \eqref{eq:init_interval_to_Haar_bd} 
by the computation
\begin{align*}
 \int\limits_{3 K_1} \left | T_{\sigma} ^{\lambda } \frac{\mathbf{1}_{K_i}}{\sqrt{\left |K_i \right |_{\sigma}} }\right| ^2 d \omega \approx \int\limits_{3 K_i} \left | T_{\sigma} ^{\lambda } \frac{u_i \mathbf{1}_{K_i}}{\sqrt{\left |I \right |_{\sigma} }}\right| ^2 d \omega = \left | I \right |_{\sigma}  \int\limits_{3 K_i} \left | T_{\sigma} ^{\lambda } \frac{u_i \mathbf{1}_{K_i}}{\left |I \right |_{\sigma}} \right| ^2 d \omega \lesssim \left |I \right |_{\sigma} \int\limits_{3 K_i} \left | T_{\sigma} ^{\lambda} \varphi \right |^2 d \omega + \left |I \right |_{\sigma} \sum\limits_{j \neq i} \int\limits_{3 K_i} \left | T_{\sigma} ^{\lambda}  \frac{\mathbf{1}_{K_j}}{\left |I \right |_{\sigma} } \right |^2 \\
\lesssim \mathfrak{A}_{T^{\lambda}, \kappa} ^{\mathcal{D}, \operatorname{glob}} (\sigma, \omega) ^2 \left | I \right |_{\sigma} \int \left | \varphi \right |^2 d \sigma + A_2 ^{\lambda} (\sigma, \omega) \lesssim  \mathfrak{A}_{T^{\lambda}, \kappa} ^{\mathcal{D}, \operatorname{glob}} (\sigma, \omega) ^2 + A_2 ^{\lambda} (\sigma, \omega)   
 \, .
\end{align*}
And by
verifying that the triple Alpert testing characteristics are controlled using
the argument from the proof of Theorem \ref{Haar}, which does \emph{not} use
moment vanishing conditions, this then completes the proof of Theorem \ref{main' Alpert}. 

\subsection{The higher dimensional argument for Theorem  \ref{thm:T1_to_Ta_all}}

In extending the proof in dimension $1$ to dimension $n\geq 2$, we encounter two obstacles requiring key changes. One is that we have $\kappa$-ellipticity in only one of the $n$ directions, so we must configure our cubes so that the other directions are negligible when we do a Taylor expansion of $K^{\lambda}$. The other obstacle is the lack of a Vandermonde determinant identity. We get around both obstacles by dilating appropriately along the coordinate axes, since the determinant and the Taylor expansion both let us factor out the dilation factors, which helps us obtain quantitative estimates. 

Let $I$ and $J$ be cubes in the same configuration as before in $\mathcal{D}$, and assume without loss of generality that the origin is the furthest vertex of $I$ from $J$. We must construct a function $\varphi$ supported on finitely many dyadic subcubes of $I$ which has all moments vanishing up to order $\kappa-1$, i.e.,
\[
	\left \langle \varphi, y^{\alpha} \right \rangle_{\sigma} = \int \varphi (y) y^{\alpha} d \sigma \left ( y \right )  = 0 \text{ for all } \left | \alpha \right |\leq \kappa-1 \, .
\]
This is equivalent to $\varphi$ being in the $L^2 (\sigma)$-orthogonal complement of the linear span of the polynomials of degree at most $\kappa -1$. This latter set is invariant under a linear change of variables $x=Ay$, so $\varphi$ having vanishing moments up to order $\kappa-1$ is invariant under any linear change of variables.

Namely, we may change variables by a rotation or a reflection. By first rotating appropriately, assume without loss of generality that $K^{\lambda}$ is $\kappa$-elliptic with respect to the vector $\mathbf{v} = \mathbf{e}_1$. The grid $\mathcal{D}$ and the cubes $I$ and $J$ may no longer be axis-parallel, but this will not matter. And by then reflecting, assume without loss of generality that the center $c_I$ of $I$ lies in the horizontal half-cone
\[
\mathcal{K} \equiv \left \{ (x_1, \ldots, x_n ) \in \mathbb{R}^n ~:~ x_1 > \sqrt{\sum\limits_{j \geq 2} x_j ^2 } \text{ and } x_j \geq 0 \text{ for all } j\geq 2\right \} \, . 
\]
Note that if $x \in \mathcal{K}$, then for all $\lambda > 0$ sufficiently small, we have $\lambda x \in I$. 

We primarily do this change of variables to simplify notation with multi-indices. For instance, because of our rotation, note that $\kappa$-ellipticity for the kernel $K^{\lambda}$ means  
\[
	\left | \partial^{(\kappa ,0,\ldots, 0)} _{y} K^{\lambda} (x,y) \right | \approx \frac{1}{\left | x - y \right|^{n-\lambda + \kappa}} \, \text{ for all } y \in I, x \in J \, . 
\]
Indeed, this follows from an argument similar to the the reasoning below \eqref{perturb}, where one eventually plugs-in
\[
\mathbf{w} = \frac{x-y}{\left | x- y \right |} \, , \quad t = - \left | x- y \right | \, .
\]

 Let us establish some notation. Since we are working on $\mathbb{R}^n$, define the relevant sets of multi-indices 
\[
	\mathcal{E}^* \equiv \{ \beta \in \mathbb{N}^n~:~ |\beta| \leq \kappa-1 \} \text{ and } \mathcal{E} \equiv \mathcal{E} ^* \cup \{ (\kappa, 0,\ldots, 0)\} \, . \] Let $\prec$ be any ordering on the multi-indices such that 
\[
	 |\alpha| < |\beta| \text{ implies } \alpha \prec \beta \, .
 \] Note the smallest and largest elements in $\mathcal{E}$ with respect to $\prec$ are $\mathbf{0}$ and $\kappa \mathbf{e}_1$, respectively.  

Let $N \equiv \left | \mathcal{E} \right |$. We imbue $\mathbb{R}^{N}$ with the ordered orthonormal basis $\left ( \mathbf{e}_{\alpha} \right )_{\alpha \in \mathcal{E}}$, where the order is given by $\mathcal{E}$. Given $\alpha \in \mathcal{E}$, we also write  
\begin{equation}\label{eq:split_multi_indices}
	\alpha = (\alpha_1, \alpha') \in \mathbb{N} \times \mathbb{N}^{n-1}
\end{equation}
where $\alpha_1$ is the first coordinate of $\alpha$. We will need to consider $N \times N$ matrices  
\[
	M = \left ( M_{\alpha, \beta} \right )_{\alpha, \beta \in \mathcal{E}} \, :
\]
we adopt the convention that $\alpha$ and $\beta$ always parametrize the rows and columns of a matrix, respectively.

We will also consider collections of points $(x_{\beta})_{\beta \in \mathcal{E} }$ indexed by the multi-indices $\beta \in \mathcal{E}$, i.e., for each $\beta \in \mathcal{E}$ we have a point 
\[
	x_{\beta} = \left ( (x_{\beta})_1 ,\ldots , (x_{\beta})_n \right ) \in \mathbb{R}^n \, ,
\] for a total of $N = |\mathcal{E}|$ points in $\mathbb{R}^n$. While occasionally we will view the tuple $ (x_{\beta})_{\beta \in \mathcal{E}}$ as a point in $\left ( \mathbb{R}^n \right )^{N}$, we will more often think of $(x_{\beta})_{\beta \in \mathcal{E}}$ as $N$ points all in the \emph{same} Euclidean space $\mathbb{R}^n$.

Let $\delta \in \left (0, 1 \right )$ and $m \in \mathbb{N}$, and set $h = 2^{-m} \ell \left ( I \right)$: the large integer $m$ and the small constant $\delta$ will be chosen later, but will only depend on $\kappa$, the dimension $n$, the doubling constants of $\sigma$ and $\omega$, and the Calder\'on-Zygmund data for $T$. Again for clarity in the proof, until we have fixed $\delta$ and $m$, we will not allow the constants implicit in the symbols $\lesssim , \approx$ or $\gtrsim$ to depend on $\delta$ and $m$.

Define 
\[
	\varphi = \frac{1}{\left | I \right|_{\sigma}} \sum\limits_{\beta \in \mathcal{E}} u_{\beta} \mathbf{1}_{K_{\beta}} \, ,
\]
where the cubes $\{ K_{\beta}\}$ are dyadic subcubes of $I$ of sidelength $\delta^2 h$ which will be chosen later but should be thought of as follows: the cubes $K_{\beta}$ are distance $\lesssim \delta h$ away from the $\mathbf{e}_1$-axis, are distance $\approx h$ from the origin and each other. Also set $\mathbf{u} = \left (u_{\beta} \right )_{\beta \in \mathcal{E}} \in \mathbb{R}^N$.

\begin{figure}[ht]\label{fig:cubes_Alpert_slant}
  \fbox{\includegraphics[width=0.75\linewidth]{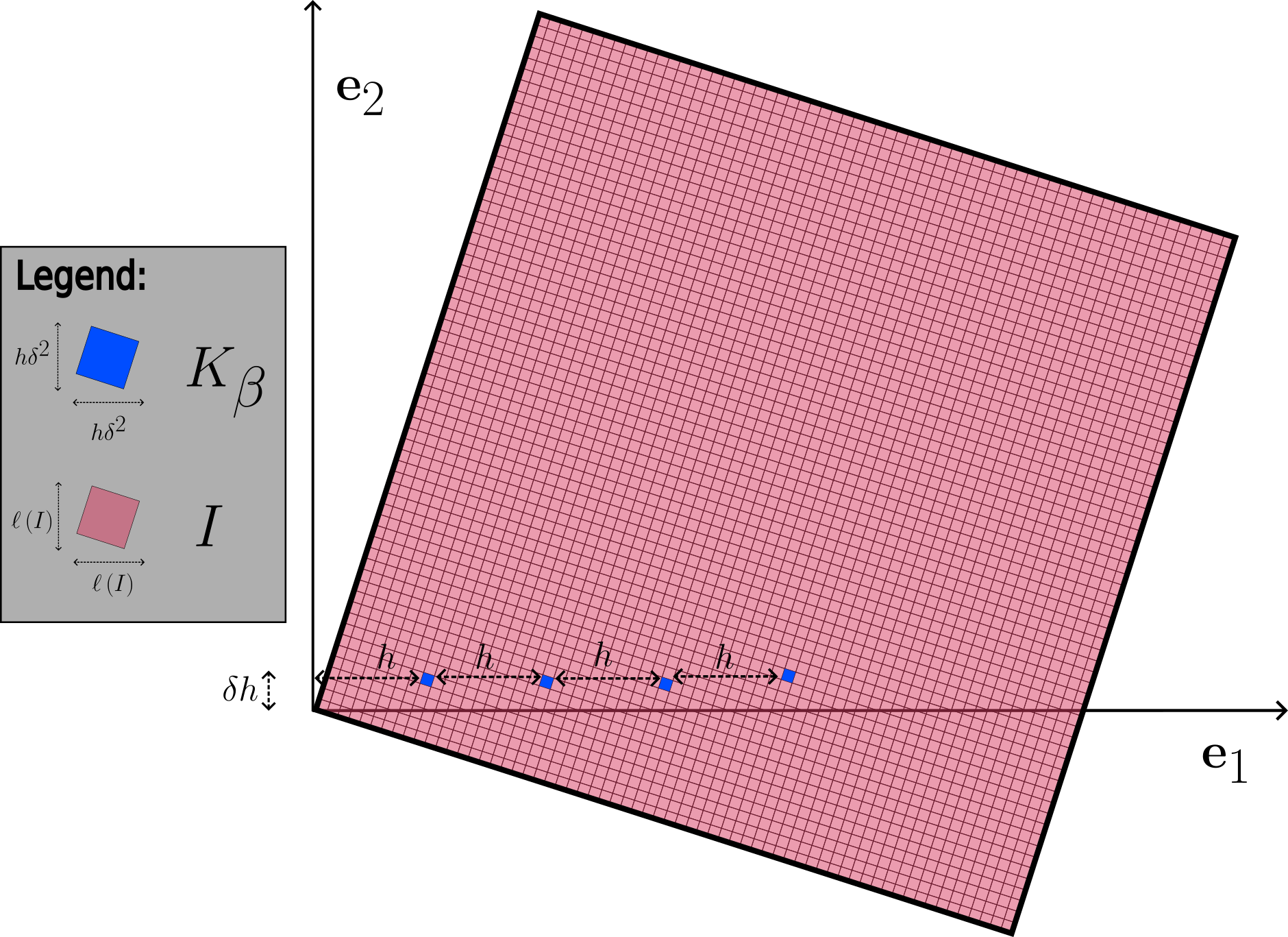}}
	\caption{The cubes $K_{\beta}$ within $I$. All distances depicted are $\approx$. }
\end{figure}

 Then the moment vanishing for $\varphi$ is equivalent to 
\begin{equation}\label{eq:moment_vanishing_general_matrix}
	M \mathbf{u} = \gamma \mathbf{e}_{(\kappa, 0, \ldots, 0)} \, , \quad \gamma \in \mathbb{R} \, ,
\end{equation}
where the moment matrix
\[
	M \equiv \begin{pmatrix} \int_{K_{\beta}} \left ( x_{\beta} \right )^{\alpha} d \sigma (x_{\beta}) \end{pmatrix}_{\alpha, \beta \in \mathcal{E}}  \, .
\]
Let $\mathbf{u}$ be a unit vector satisfying \eqref{eq:moment_vanishing_general_matrix}: such a unit vector exists because the matrix $\widetilde{M}$ obtained from $M$ by replacing its last row by a row of zeros must then have a nontrivial kernel because it maps an $N$-dimensional space into an $(N-1)$-dimensional space, and so we can take $\mathbf{u}$ to be a unit vector in the kernel. 

Since
\begin{equation}\label{eq:last_moment_genD}
\int y_1 ^{\kappa} \varphi (y) d \sigma (y) = \frac{\gamma}{\left | I \right |_{\sigma}} \, ,
\end{equation}
then to estimate the moment of $\varphi$ corresponding to the ellipticity of $K^{\lambda}$, it suffices to estimate $\gamma$. 
Cramer's rule applied to \eqref{eq:moment_vanishing_general_matrix} yields
\begin{equation}\label{eq:Cramer_high_dim}
	\left | u_{\beta} \right | = \left | \gamma \right | \frac{\left | \det M^{\beta} \right | }{\left | \det M \right |} \, ,
\end{equation}
where $M^{\beta}$ is simply the matrix $M$ with the $\beta$-th column vector replaced by the column vector $\mathbf{e}_{(\kappa, 0, \ldots, 0)}$.

 Because we lack a Vandermonde determinant formula in higher dimensions, we must estimate $\left | \det M \right |$ differently from before. First note that if we set 
\[
	\bar{x}_{\beta} \equiv \left ( \frac{(x_{\beta})_1}{h}, \frac{ (x_{\beta})_2}{\delta h}, \ldots,  \frac{ (x_{\beta})_n}{\delta h} \right )
\]
and let $H$ denote the diagonal matrix with $(\alpha, \alpha)$-th entry equal to $h^{|\alpha|} \delta^{|\alpha'|}$ (where $\alpha'$ is as in \eqref{eq:split_multi_indices}), 
then 
$M$ factors as 
\[
	M = H M' \, , \quad \text{ where } M' \equiv \begin{pmatrix} \int_{K_{\beta}} \bar{x}^{\alpha} _{\beta} d \sigma (x_{\beta}) \end{pmatrix}_{\alpha, \beta \in \mathcal{E}} \, . 
\]
Then 
\[
	\det H = h ^{\sum\limits_{\alpha \in \mathcal{E}} |\alpha|} \delta ^{\sum\limits_{\alpha \in \mathcal{E}} |\alpha'|} =  h ^{\kappa} \left ( h^{\sum\limits_{\alpha \in \mathcal{E}^*} |\alpha|} \delta ^{\sum\limits_{\alpha \in \mathcal{E}^*} |\alpha'|} \right ) \, .
\]
As for $M'$, we get
\[
	\det M' =  \det \begin{pmatrix} \int_{K_{\beta}}  \bar{x}^{\alpha} _{\beta} d \sigma (x_{\beta}) \end{pmatrix}_{\alpha, \beta \in \mathcal{E}} = \sum\limits_{\pi \in \operatorname{Aut} \left (\mathcal{E} \right )} \operatorname{sgn} (\pi) \prod\limits_{\beta \in \mathcal{E}} \int_{K_{\beta}}  \bar{x}^{\pi(\beta)} _{\beta} d \sigma (x_{\beta}) \, .
\]
Applying Fubini and noting that $\mathbf{x} \equiv \left ( x_{\beta} \right )_{\beta \in \mathcal{E}}$ belongs to the product space $\mathbf{K} \equiv \prod\limits_{\beta \in \mathcal{E}} K_{\beta}$ imbued with the product measure $\boldsymbol{\sigma} \equiv \otimes_{\beta \in \mathcal{E}} \sigma$, we see that 
\begin{equation}\label{eq:det_Fubini}
	\det M' = \int\limits_{\mathbf{K}} \sum\limits_{\pi \in \operatorname{Aut} \left (\mathcal{E} \right )} \operatorname{sgn} (\pi)   \bar{x}^{\pi(\beta)} _{\beta}  d  \boldsymbol{\sigma}  (\mathbf{x})  = \int\limits_{\mathbf{K} } \det M'' (\bar{\mathbf{x}})  d  \boldsymbol{\sigma}  (\mathbf{x}) \, ,
\end{equation}
where
\[
	\bar{\mathbf{x}} \equiv \left ( \bar{x}_{\beta} \right )_{\beta \in \mathcal{E}} \, ,
\]
and where we define the matrix function 
\[
	M'' \left (\left ( z_{\beta}\right )_{\beta \in \mathcal{E}} \right ) \equiv \begin{pmatrix} z_{\beta} ^{\alpha} \end{pmatrix}_{\alpha, \beta \in \mathcal{E}} \, .
\]
In what follows, we view $M''$ as a function of the point $\mathbf{z} \equiv (z_{\beta})_{\beta \in \mathcal{E}} \in \left (\mathbb{R}^{n} \right)^N$.
\begin{lemma}\label{lemma:det_nondegen}
	The function $\mathbf{z} \mapsto \det M'' (\mathbf{z})$ is nonzero on a dense subset of $\mathbb{R}^{Nn}$.	
\end{lemma}
\begin{proof}
	It suffices to show that on each open set $\mathbf{O}$ of the form
	\[
		\mathbf{O} \equiv \prod\limits_{\beta \in \mathcal{E}} \left ( O_{\beta} \right )_{\beta} \subset \mathbb{R}^{Nn} \, ,
	\]
	there exists a point $\mathbf{z} = \left ( z_{\beta} \right )_{\beta \in \mathcal{E}} \in \mathbf{O}$ such that $\det M'' \left (\mathbf{z} \right ) \neq 0$. 

	Assume to the contrary $\det M''$ vanishes everywhere on $\mathbf{O}$. Then all of its derivatives should vanish on $\mathbf{O}$. We will show that one of its derivatives is nonzero on $\mathbf{O}$, yielding a contradiction. Let $\alpha^* \equiv \kappa \mathbf{e}_1$ be the last index in $\mathcal{E}$. Then differentiating $\det M''$ with respect to $\partial^{\alpha^*} _{z_{\alpha^*}}$ and using the Laplace expansion of the determinant of $M''$ from the last column yields 
	\[
		\partial^{\alpha^*} _{z_{\alpha^*}} \det M'' \left ( \mathbf{z} \right )  =  c_{\alpha^*}   \det \left ( z_{\beta} ^{\alpha} \right )_{\alpha, \beta \in \mathcal{E}^*} \, , 
	\]
	where $c_{\alpha^*}$ is a nonzero constant independent of $\mathbf{z} = \left ( z_{\beta} \right )_{\beta \in \mathcal{E}}$.

	Then it suffices to show $\det \left ( z_{\beta} ^{\alpha} \right )_{\alpha, \beta \in \mathcal{E}^*}$ is not identically $0$ on the open set 
	\[
		\mathbf{O}^* \equiv \prod\limits_{\beta \in \mathcal{E}^*} O_{\beta} \subset \left ( \mathbb{R}^n \right)^{N-1}\, .
	\]
	If $\alpha^{**}$ is the last term in $\mathcal{E}^*$, then the Laplace expansion of $\det \left ( z_{\beta} ^{\alpha} \right )_{\alpha, \beta \in \mathcal{E}^*}$ from the last column again yields
	\[
		\partial_{z_{\alpha^{**}}} ^{\alpha^{**}} \det \left ( z_{\beta} ^{\alpha} \right )_{\alpha, \beta \in \mathcal{E}^*} = c_{\alpha ^{**}} \det \left ( z_{\beta} ^{\alpha} \right )_{\alpha, \beta \in \mathcal{E}^{**}} \ , 
	\]
	where $\mathcal{E}^{**}$ is missing the last two terms of $\mathcal{E}$. Iterating this process and recognizing the multi-index $\mathbf{0}$ as the first element in $\mathcal{E}$, we see we must show that
	\[
		\det \left ( z_{\beta} ^{\alpha} \right )_{\alpha, \beta \in \{ \mathbf{0} \}} 
	\]
	is not identically $0$ on $O_{\mathbf{0}}$. This determinant equals $1$ since $\left ( z_{\beta} ^{\alpha} \right )_{\alpha, \beta \in \{ \mathbf{0} \}}$ is a $1 \times 1$ matrix with constant entry $1$.	
	\end{proof}
Clearly, Lemma \ref{lemma:det_nondegen} also holds if we replace $M''\left ( \mathbf{z} \right ) $ by $M''_{\gamma} (\mathbf{z})$, where
\[
M'' _{\gamma} \left ( \mathbf{z} \right ) \equiv \left ( z_{\beta} ^{\alpha} \right )_{\alpha \in \mathcal{E} ^*, \beta \in \mathcal{E} \setminus \{ \gamma \} } \, . 
\]
\begin{corollary}\label{cor:choose_balls}
There exists balls $\{ B_{\beta} \}_{\beta \in \mathcal{E}}$ in the half-cone $\mathcal{K}$  with centers $\{z_{\beta}\}_{\beta \in \mathcal{E}}$  and identical radii $\epsilon (n, \kappa)$, such that whenever we have points $(y_{\beta})_{\beta \in \mathcal{E}}$ such that $y_{\beta} \in B_{\beta}$ for each $\beta$, then
\begin{equation}\label{eq:det_nondegen_cor}
\left | \det (y_{\beta}^{\alpha} )_{\alpha, \beta \in \mathcal{E}} \right| \approx 1 \, ,
\end{equation}
and the function in between absolute values is of one sign.

Furthermore, the balls are all contained in the $1$-neighborhood of the $\mathbf{e}_1$-axis, are all separated from each other by distance $\approx 1$, and are distance $\approx 1$ from the origin.
\end{corollary}
Note that the balls in the above corollary exist and are chosen independently of $I$ and $K_{\beta}$: their choice only depends on $n$ and $ \kappa$.
\begin{proof}
First pick a configuration of points $(z_{\beta})_{\beta \in \mathcal{E}}$ in the half-cone $\mathcal{K}$: the specific choice configuration does not matter, we only need that the points are all separated by distance $\approx 1$ and are distance $\approx 1$ from the origin. To be concrete, we take $(z_{\beta})_{\beta \in \mathcal{E}}$ to be the first $N$ points in the collection 
\[
	\left \{ \frac{1}{2} \mathbf{e}_2 + t \mathbf{e}_1 \in \mathbb{R}^n  ~:~ t\in \mathbb{Z}_{+} \right \} \, . 
\]
 By Lemma \ref{lemma:det_nondegen}, we may perturb the points $(z_{\beta})_{\beta \in \mathcal{E}}$ slightly, by a distance of at most $\frac{1}{8}$, so that the quantity
 \[
 \left | \det (z_{\beta}^{\alpha} )_{\alpha, \beta \in \mathcal{E}} \right | \neq 0 \, .
 \] Then define $c(\kappa, n)$ to be the value of the left-side: $c(\kappa, n)$ is some positive constant only depending on $\kappa$ and $n$.
By continuity, there exists some constant $\epsilon = \epsilon (\kappa, n) \in (0, \frac{1}{8} )$, only depending on $\kappa$ and $n$, such that whenever $y_{\beta}$ is a point in the ball 
\[
 B_{\beta} \equiv B_{\epsilon } ( z_{\beta}) \, ,
\]
we have
\begin{equation}\label{eq:det_nondegen}
\left | \det (y_{\beta}^{\alpha} )_{\alpha, \beta \in \mathcal{E}} \right| \approx 1 \, ,
\end{equation}
and the function in between absolute values is of one sign. Note the balls $B_{\beta}$ are all contained in the half-cone $\mathcal{K}$, are each separated by distance $\approx 1$, and are distance $\approx 1$ from the origin.
\end{proof}
\begin{remark}\label{rmk:simultaneous}
    Using the sentence just before Corollary \ref{cor:choose_balls}, one can easily modify the proof of Corollary \ref{cor:choose_balls} so that when $y_{\beta} \in B_{\beta}$ for all $\beta$, we simultaneously have \eqref{eq:det_nondegen_cor} and 
\[
\left | M_{\gamma} '' ( \left ( y_{\beta} ^{\alpha} \right )_{\alpha, \beta \in \mathcal{E} } ) \right | =  \left | \det \left ( y_{\beta} ^{\alpha} \right )_{\alpha \in \mathcal{E}^*, \beta \in \mathcal{E} \setminus \{\gamma \} } \right | \approx 1 \text{ for all } \gamma \in \mathcal{E} \, ,
\]
and each function in between the absolute values does not change sign.
\end{remark}

Define the function 
\[
	\hat{x} \equiv \frac{1}{h} x = \frac{1}{2^{-m} \ell \left ( I \right )} x  \, .
\]
Given $A \subset \mathbb{R}^n$, define 
\[
	\hat{A} \equiv \left \{ \hat{x} ~:~ x \in A \right \} \, . 
\]
Then $\hat{I}$ is the cube $I$ but re-scaled to have sidelength $2^{m}$. Because the center $c_I$ of $I$ belongs to $\mathcal{K}$, then so does the center $c_{\hat{I}}$ of $\hat{I}$. Choose $m$ large enough so that $c_{\hat{I}}$ extends past the balls $B_{\beta}$ along the $\mathbf{e}_1$-axis, i.e., 
\[
	\left( c_{\hat{I}} \right )_1 \geq \sup\limits_{\beta \in \mathcal{E}}\sup\limits_{y_{\beta} \in B_{\beta}} \left (y_{\beta} \right )_{1} \, . 
\]
Note then that each $B_{\beta} \subset \hat{I}$. We also note that by this re-scaling, the dyadic descendants of $\hat{I}$ of generation $m$ are of unit length. 

Given $\hat{x} \in \mathbb{R}^n$ and $\delta < \frac{1}{2}$, define
\[
	\bar{x} \equiv \left ( \hat{x}_1, \frac{\hat{x}_2}{\delta} , \ldots, \frac{\hat{x}_n}{\delta} \right ) \, , 
\]
which coincides with the definition
\[
	\bar{x} = \left ( \frac{x_1}{h}, \frac{x_2}{ \delta h}, \ldots, \frac{x_n}{ \delta h} \right ) \, . 
\]
And again given a set $\hat{A} \subset \mathbb{R}^n$, define
\[
	\bar{A} \equiv \left \{ \bar{x} ~:~ \hat{x} \in \hat{A} \right \} \, . 
\] 
\begin{figure}[ht]\label{fig:cubes_Alpert_slant_stretch}
  \fbox{\includegraphics[width=0.75\linewidth]{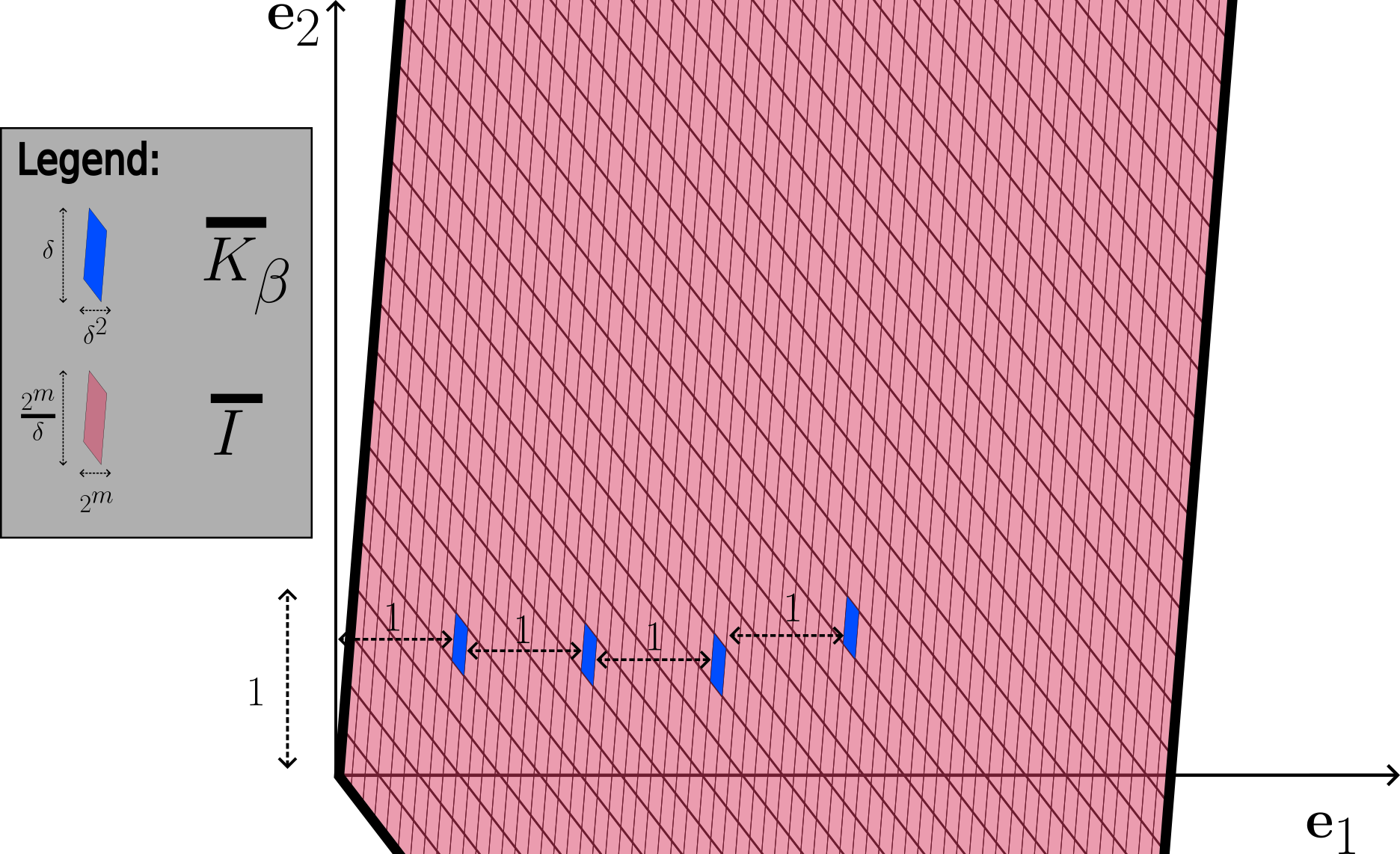}}
	\caption{The stretched cubes $\overline{K}_{\beta}$ within $\overline{I}$. All depicted distances are $\approx$.}
\end{figure}Because each ball $B_{\beta} \subset \hat{I} \subset \bar{I}$, then by Corollary \ref{cor:choose_balls} we have that
\begin{equation}\label{eq:pre-cover}
\left ( \bigcup\limits_{\beta} B_{\beta} \right ) \subset \bar{I} \cap \left \{z \in \mathbb{R}^n ~:~ 0 \leq \operatorname{dist} \left ( z, \mathbf{e}_1\text{-axis} \right ) \leq 1 \right \} \, . 
\end{equation}
The set $\{K\}$ of dyadic subcubes of $I$ of sidelength $h \delta^2$ cover $I$. Their images $\{\hat{K}\}$ then cover $\hat{I}$, and hence their images $\{\bar{K}\}$ cover $\bar{I}$. Since the right side of \eqref{eq:pre-cover} is a subset of $\bar{I}$, then it is covered by $\{\bar{K}\}$.

Let $\bar{K}_{\beta}$ denote the element of $\{\bar{K}\}$ containing the point $z_{\beta}$. Note that $\bar{K}_{\beta}$ is an object of dimensions $\approx \delta^2$ in the $\mathbf{e}_1$ direction, and $\approx \delta$ in the directions $\mathbf{e}_2, \ldots, \mathbf{e}_n$. For $\delta$ sufficiently small and only depending on the radius $\epsilon = \epsilon (n, \kappa)$ of the balls $B_{\beta}$, we in fact have that $\bar{K}_{\beta} \subset B_{\beta}$ for each $\beta$. By Corollary \ref{cor:choose_balls}, if $\bar{x}_{\beta} \in \bar{K}_{\beta}$, then 
\begin{equation}\label{eq:det_nondegen_bars}
	|\det M'' ( (\bar{x}_{\beta} )_{\beta\in \mathcal{E}} ) | \approx 1 \, ,
\end{equation}
and the determinant in between absolute values is of one sign.

Then $\bar{K}_{\beta}$ is the image of some dyadic subcube $K_{\beta}$ of $I$ of sidelength $h \delta^2$ under the composition of the $\hat{~}$ and  $\bar{~}$ maps. Then for all $x_{\beta} \in K_{\beta}$, we have $\bar{x}_{\beta} \in \bar{K}_{\beta}$ and so \eqref{eq:det_nondegen_bars} still holds, and the determinant is of one sign. Furthermore, each $K_{\beta}$ is of distance $\lesssim h \delta$ from the $\mathbf{e}_1$ axis and distance $\approx h$ from the origin.

Thus for $\mathbf{x} = (x^{\beta})_{\beta} \in \mathbf{K}$, by \eqref{eq:det_Fubini} we have
\[
	\left | \det M' \right | \approx \prod\limits_{\beta} \left | K_{\beta} \right |_{\sigma} \, ,
\]
and hence
\[
	\left | \det M \right | = \left | \det H \right | \left |  \det M' \right | \approx h ^{\kappa} \left ( h^{\sum\limits_{\alpha \in \mathcal{E}^*} |\alpha|} \delta ^{\sum\limits_{\alpha \in \mathcal{E}^*} |\alpha'|} \right ) \prod\limits_{\beta \in \mathcal{E}} \left | K_{\beta } \right |_{\sigma} \, . 
\]
Similarly, by the Laplace expansion of the determinant, we have $\left | \det M^{\beta} \right |$ equals the absolute value of the matrix obtained by deleting the $\beta$-th column and last row of $M$. From this last matrix, we can factor out a diagonal matrix $H$ whose $(\alpha, \alpha)$-th entry equals $h^{|\alpha|} \delta^{|\alpha'|}$, where $\alpha$ varies over $\mathcal{E}^*$ now, and applying the same logic as above, and in particular by applying Remark \ref{rmk:simultaneous}, we may also obtain
\[
	\left | \det M^{\beta} \right | \approx h ^{\sum\limits_{\alpha \in \mathcal{E}^*} |\alpha|} \delta ^{\sum\limits_{\alpha \in \mathcal{E}^*} |\alpha'|} \prod\limits_{\alpha \in \mathcal{E} ~:~ \alpha \neq \beta } \left | K_{\alpha} \right |_{\sigma} \, .
\]
Thus with our initial application of Cramer's rule \eqref{eq:Cramer_high_dim}, we get
\[
	\left | u_{\beta} \right | = \left | \gamma \right | \left | \frac{\det M^{\beta}}{\det M} \right | \approx \left | \gamma \right | \frac{1}{h^{\kappa} \left | K_{\beta}\right |_{\sigma}} \, ,
\]
or rather
\begin{equation}\label{eq:gamma_many_beta}
	\left | \gamma \right | \approx \left | u_{\beta} \right | \left | K_{\beta} \right |_{\sigma} h^{\kappa}  \text{ for all } \beta \in \mathcal{E} \, .
\end{equation}

Now let $\beta^* \in \mathcal{E}$ be a multi-index such that $\sum\limits_{\beta \in \mathcal{E} } \left | u_{\beta} \right | \left | K_{\beta} \right |_{\sigma} \approx \left | u_{\beta^*} \right | \left | K_{\beta^*} \right |_{\sigma}$. Using Taylor's theorem with $x \in J$, along with the moment-vanishing of $\varphi$ we then get $\int K^{\lambda} (x,y) \varphi (y) d\sigma(y)$ equals 
\begin{align*}
	\int \left \{ \partial^{\left ( \kappa, 0, \ldots, 0 \right )} _y K ^{\lambda} (x,0) \frac{y_1^{\kappa}}{\kappa!}  +  \sum\limits_{|\alpha| = \kappa ~:~ \alpha \neq \kappa \mathbf{e}_1 } c_{\alpha} \partial^{\alpha} _y  K^{\lambda} (x,0) y^{\alpha} +  \sum\limits_{|\alpha| = \kappa +1 } c_{\alpha} \partial^{\alpha} _y  K ^{\lambda} (x,\theta y) y^{\alpha}\right \} \varphi (y) d \sigma(y) \, . 
\end{align*}
Using \eqref{eq:gamma_many_beta}, \eqref{eq:last_moment_genD},  $\kappa$-ellipticity, and the fact that $\operatorname{dist}(\operatorname{supp} \varphi, 0) \lesssim h$,  we estimate the first term by 
\[
	\left | \partial^{\left ( \kappa, 0, \ldots, 0 \right )} _y K ^{\lambda} (x,0) \int  \frac{y_1^{\kappa}}{\kappa!} \varphi (y) d \sigma (y) \right |\approx \left | \partial^{(\kappa, 0, \ldots, 0)} _{y} K ^{\lambda} (x,0) \right | \left | \int   y_1^{\kappa} \varphi(y) \right | \approx  \left ( \frac{h}{\ell \left (I \right)} \right )^{\kappa} \frac{ \left | u_{\beta^*} \right | \left | K_{\beta^*} \right |_{\sigma}}{  \left |I \right |^{1 - \frac{\lambda}{n}} \left | I \right|_{\sigma}}   \, .
\]
 We estimate the second term using the Calder\'on-Zygmund estimates and that each cube $K_{\beta}$ is distance $\leq h \delta$ from the $\mathbf{e}_1$-axis: 
\[
	\left |  \sum\limits_{|\alpha| = \kappa ~:~ \alpha \neq \kappa \mathbf{e}_1 } c_{\alpha} \partial^{\alpha} _y  K ^{\lambda} (x,0) \int  y^{\alpha}\varphi (y)  d \sigma(y)     \right |	\lesssim \left ( \frac{h}{\ell \left (I \right)} \right )^{\kappa} \frac{\delta}{ \left |I \right |^{1 - \frac{\lambda}{n}} } \sum\limits_{\beta \in \mathcal{E}} \left | u_{\beta} \right |  \frac{\left | K_{\beta} \right |_{\sigma} }{  \left | I \right|_{\sigma} }   \approx \left ( \frac{h}{\ell \left (I \right)} \right )^{\kappa} \frac{\delta \left | u_{\beta^*} \right | \left | K_{\beta^*} \right |_{\sigma} }{ \left |I \right|^{1 - \frac{\lambda}{n}} \left | I \right |_{\sigma } }   \, .
\]
The third term can also be estimated by the Calder\'on-Zygmund estimates, 
\[
	\left | \int \sum\limits_{|\alpha| = \kappa +1 } c_{\alpha} \partial^{\alpha} _y  K ^{\lambda} (x,\theta y) y^{\alpha} \varphi (y) d \sigma(y) \right | \lesssim \left ( \frac{h}{\ell \left (I \right)} \right )^{\kappa+1}  \frac{1}{ \left |I \right |^{1 - \frac{\lambda}{n}} } \sum\limits_{\beta \in \mathcal{E}} \left | u_{\beta} \right |  \frac{\left | K_{\beta} \right |_{\sigma} }{  \left | I \right|_{\sigma} }    \approx \left ( \frac{h}{\ell \left (I \right)} \right )^{\kappa+1} \frac{ \left | u_{\beta^*} \right | \left | K_{\beta^*} \right |_{\sigma} }{ \left |I \right|^{1 - \frac{\lambda}{n}} \left | I \right |_{\sigma } }  \]

Fixing $\delta$ and $2^{-m} = \frac{h}{\ell \left (I \right)}$ to be sufficiently small constants, we see that the first term dominates, that $T_{\sigma} \varphi (x)$ is of one consistent sign for $x \in J$ and 
\[
	\left | T_{\sigma} ^{\lambda} \varphi (x) \right | = \left | \int K ^{\lambda } (x,y) \varphi (y) d\sigma(y) \right | \approx \left ( \frac{h}{\ell \left (I \right)} \right )^{\kappa} \frac{ \left | u_{\beta^*} \right | \left | K _{\beta^*} \right | _{\sigma} }{ \left |I \right|^{1 - \frac{\lambda}{n}} \left | I \right| _{\sigma}}  \approx \left ( \frac{h}{\ell \left (I \right)} \right )^{\kappa} \frac{1}{\left |I \right|^{1 - \frac{\lambda}{n}}} \sum\limits_{\beta} \left | u_{\beta} \right | \frac{\left | K _{\beta} \right | _{\sigma}}{\left |I \right|_{\sigma}} \, . 
\]
 Because we have fixed $\delta$ and $m$, which only depend on $\kappa$ and $n$, the doubling constants for $\sigma$ and $\omega$ and the Calder\'on-Zygmund data for $T$, \emph{we now absorb any constants depending on $\delta$ and $ m$ into the symbols $\lesssim, \approx$ and $\gtrsim$,} i.e.,
 \[
	\left | T_{\sigma} ^{\lambda} \varphi (x) \right | \approx  \frac{1}{\left |I \right|^{1 - \frac{\lambda}{n}}} \sum\limits_{\beta} \left | u_{\beta} \right | \frac{\left | K _{\beta} \right | _{\sigma}}{\left | I \right |_{\sigma}} \, .
\]
By doubling of $\sigma$, we have $\left | K _{\beta} \right | _{\sigma} \approx \left | I \right |_{\sigma}$. Combined with the estimate $\sum\limits_{\beta} \left | u _{\beta} \right | \approx 1$, we in fact obtain the nondegeneracy condition \eqref{eq:nondegen_original}. This completes the key modifications of the proof of Theorem \ref{main' Alpert}. The reader may check the other routine modifications.

\section{Concluding remarks}

For the orthonormal basis of Haar wavelets $\left\{
h_{Q}^{\mu, \gamma }\right\}  _{Q\in\mathcal{D}, \Gamma_{Q, n} ^{\mu}}$ in $\mathbb{R}^{n}$ for
\emph{doubling} measures $\mu$, we obtain the best theorem possible for
admissible truncations of the $\lambda$-fractional vector Riesz transform $\mathbf{R}^{\lambda, n}$ when $\lambda \neq 1$: for any dyadic
grid $\mathcal{D}$, the operator norm of $\mathbf{R}^{\lambda,n} _{\sigma}$ from $L^{2}\left(  \sigma\right)
$ to $L^{2}\left(  \omega\right)  $ is comparable to the sum of the two Haar
testing characteristics. However, our Theorems \ref{main''} and  \ref{thm:T1_to_Ta_all} show more: whenever a gradient elliptic operator satisfies a $T1$ theorem, then it can be improved to a Haar and Alpert testing theorem.

As Theorem \ref{Haar} shows, if the operator is gradient elliptic then the cube testing conditions and the $A_2$ condition are implied by the Haar testing condition when the measures are doubling. Regarding the $T1$ to $Th$ Theorem \ref{main''}, little is known  beyond gradient elliptic operators.
The $T1$ Theorem \ref{T1_L2_Riesz} is conjectured to hold the larger class of Stein elliptic Calder\'on-Zygmund operators.  It is unknown whether one can replace gradient elliptic operators with the larger class of Stein elliptic operators in Theorem \ref{Haar}.

Keeping with the assumption of gradient elliptic operators, the situation for arbitrary locally finite positive Borel measures
$\mu$ in $\mathbb{R}^{n}$ is quite abysmal. In this general
case, we don't even know if the operator norm is comparable to the Haar testing characteristics
when $T^{\lambda}$ is an admissible truncation of the simplest singular integral, the
\emph{Hilbert transform} $H$. Or even if the operator norm of
$H$ is comparable to the sum of the two Haar testing characteristics, but
taken now over \emph{all} dyadic grids $\mathcal{D}$! Notably, we do not yet have the technology to improve the known $T1$ theorem for the Hilbert transform into a Haar testing theorem. This remains an interesting open question.

We mention that almost nothing is known about the more abstract problem of identifying those triples $\left(  T,\mathcal{O}%
_{1},\mathcal{O}_{2}\right)  $ of bounded operators $T$ (or collections of
such) from a Hilbert space $\mathcal{H}_{1}$ to a Hilbert space $\mathcal{H}_{2}$, and orthonormal bases
$\mathcal{O}_{1}$ and $\mathcal{O}_{2}$ in these Hilbert spaces, respectively,
such that the operator norm of (each) $T$ is (uniformly) comparable to the sum
of the testing characteristics for $T$ associated to the orthonormal bases
$\mathcal{O}_{1}$ and $\mathcal{O}_{2}$.

Finally, one can consider the Banach space analogues of the above questions, where one replaces orthonormal bases with frames. If we consider a Calder\'on-Zygmund operator and its boundedness from $L^p (\sigma)$ to $L^p (\omega)$, one may for instance take the Haar wavelets $\{h_{I} ^{\sigma} \}_{I \in \mathcal{D}}$, $\{h_{I} ^{\omega} \}_{I \in \mathcal{D}}$ to be the frames associated to $L^p (\sigma)$ and $L^p (\omega)$, respectively. Similarly, weighted Alpert wavelets form a frame in weighted $L^p$ \cite{SaWi}, and thus the above questions can all be formulated for weighted Alpert wavelets as well.

\end{document}